\newtheorem{theorem}{Theorem}
\theoremstyle{plain}
\newtheorem{example}{Example}
\newtheorem{lemma}{Lemma}
\newtheorem{problem}{Problem}
\newtheorem{proposition}{Proposition}
\newtheorem{remark}{Remark}
\numberwithin{equation}{section}
\begin{document}

\title[Restricted testing]{Restricted testing for positive operators}

\author{Tuomas Hyt\"onen}
\address[T. Hyt\"onen]{Department of Mathematics and Statistics, P.O. Box 68 (Pietari Kalmin katu 5),
FI-00014 University of Helsinki, Finland}
\email{tuomas.hytonen@helsinki.fi}
\thanks{T.H. was supported by the Academy of Finland, project Nos.~307333 (Centre of Excellence in Analysis and Dynamics Research) and 314829 (Frontiers of singular integrals).}

\author{Kangwei Li}
\address[K. Li]{BCAM, Basque Center for Applied Mathematics, Mazarredo, 14. 48009 Bilbao Basque Country, Spain}
\curraddr{Center for Applied Mathematics, Tianjin University, Weijin Road 92, 300072 Tianjin, China}
\email{kangwei.nku@gmail.com}
\thanks{K.L. was supported by Juan de la Cierva - Formaci\'on 2015 FJCI-2015-24547, by the Basque Government through the BERC
2018-2021 program and by Spanish Ministry of Economy and Competitiveness
MINECO through BCAM Severo Ochoa excellence accreditation SEV-2017-0718
and through project MTM2017-82160-C2-1-P funded by (AEI/FEDER, UE) and
acronym ``HAQMEC''}

\author{Eric Sawyer}
\address[E. Sawyer]{Department of Mathematics\& Statistics, McMaster University, 1280 Main Street West, Hamilton, Ontario, Canada L8S 4K1}
\email{sawyer@mcmaster.ca}
\thanks{E.S. was supported in part by a grant from the National Sciences and Engineering Reseach Council of Canada}

\makeatletter
\@namedef{subjclassname@2010}{%
  \textup{2010} Mathematics Subject Classification}
\makeatother
\subjclass[2010]{42B20}
\keywords{Two weight $T(1)$ theorems, positive operators, restricted testing conditions}

\begin{abstract}
We prove that for certain positive operators $T$, such as the
Hardy-Littlewood maximal function and fractional integrals, there is a
constant $D>1$, depending only on
the dimension $n$, such that the two
weight norm inequality%
\begin{equation*}
\int_{\mathbb{R}^{n}}T\left( f\sigma \right) ^{2}d\omega \leq C\int_{\mathbb{%
R}^{n}}f^{2}d\sigma
\end{equation*}%
holds for all $f\geq 0$ \emph{if and only if} the (fractional) $A_{2}$
condition holds, and the restricted testing condition%
\begin{equation*}
\int_{Q}T\left( 1_{Q}\sigma \right) ^{2}d\omega \leq C\left\vert
Q\right\vert _{\sigma }
\end{equation*}%
holds for all cubes $Q$ satisfying $\left\vert 2Q\right\vert _{\sigma }\leq
D\left\vert Q\right\vert _{\sigma }$. If $T$ is linear, we require as well
that the dual restricted testing condition%
\begin{equation*}
\int_{Q}T^{\ast }\left( 1_{Q}\omega \right) ^{2}d\sigma \leq C\left\vert
Q\right\vert _{\omega }
\end{equation*}%
holds for all cubes $Q$ satisfying $\left\vert 2Q\right\vert _{\omega }\leq
D\left\vert Q\right\vert _{\omega }$.
\end{abstract}

\maketitle


\section{Introduction}

One of the earliest uses of testing conditions to characterize a weighted
norm inequality occurs in 1982 in \cite{Saw3}, where it was shown that for
the Hardy-Littlewood maximal function $M$,
\begin{equation}
\int_{\mathbb{R}^{n}}Mf\left( x\right) ^{2}w\left( x\right) dx\leq C\int_{%
\mathbb{R}^{n}}f\left( x\right) ^{2}v\left( x\right) dx,\ \ \ \ \ \text{for
all }f\left( x\right) \geq 0,  \label{eq:twoweightM}
\end{equation}%
if and only if the following testing condition holds:%
\begin{equation*}
\int_{Q}M\left( \mathbf{1}_{Q}v^{-1}\right) \left( x\right) ^{2}w\left(
x\right) dx\leq C\int_{Q}v\left( x\right) ^{-1}dx,\ \ \ \ \ \text{for all
cubes }Q\text{ in }\mathbb{R}^{n}.
\end{equation*}%
Thus it suffices to test the weighted norm inequality over the simpler
collection of test functions $f=\mathbf{1}_{Q}v^{-1}$ for cubes $Q$.

Two years later, David and Journ\'{e} showed in their $T1$ theorem \cite%
{DaJo}, that the unweighted inequality%
\begin{equation*}
\int_{\mathbb{R}^{n}}Tf\left( x\right) ^{2}dx\leq C\int_{\mathbb{R}%
^{n}}f\left( x\right) ^{2}dx,\ \ \ \ \ \text{for all }f\in L^{2}\left(
\mathbb{R}^{n}\right) ,
\end{equation*}%
holds if and only if the following pair of dual testing conditions hold:%
\begin{equation*}
\int_{Q}T\left( \mathbf{1}_{Q}\right) \left( x\right) ^{2}dx\leq C\int_{Q}dx%
\text{ and }\int_{Q}T^{\ast }\left( \mathbf{1}_{Q}\right) \left( x\right)
^{2}dx\leq C\int_{Q}dx,\ \ \ \ \ \text{for all cubes }Q\text{ in }\mathbb{R}%
^{n}.
\end{equation*}%
Here $T$ is a general Calder\'{o}n-Zygmund singular integral on $\mathbb{R}%
^{n}$ and the testing functions are simply the indicators $\mathbf{1}_{Q}$
for cubes $Q$\footnote{%
The moniker `$T1$ theorem' refers to the equivalent formulation of the
testing conditions as weak boundedness property, $T1\in BMO$ and $T^{\ast
}1\in BMO$.}. The following year David, Journ\'{e} and Semmes extended the $%
T1$ theorem to a $Tb$ theorem \cite{DaJoSe} in which the testing conditions
become $b\mathbf{1}_{Q}$ and $b^{\ast }\mathbf{1}_{Q}$ for appropriately
accretive functions $b$ and $b^{\ast }$ on $\mathbb{R}^{n}$.

A couple of decades later, and motivated by the Painlev\'{e} problem of
characterizing removable singularities for bounded analytic functions,
Nazarov, Treil and Volberg solved in 2003 a particular one-weight
formulation of the norm inequality for Riesz transforms $\mathcal{R}$,
including the Cauchy transform $\mathcal{C}g\left( z\right) \equiv \int_{%
\mathbb{C}}\frac{1}{w-z}g\left( w\right) dw$ \cite{NTV},%
\begin{equation*}
\int_{\mathbb{R}^{n}}\left\vert \mathcal{R}\left( f\mu \right) \left(
x\right) \right\vert ^{2}d\mu \left( x\right) \leq C\int_{\mathbb{R}%
^{n}}f\left( x\right) ^{2}d\mu \left( x\right) ,\ \ \ \ \ \text{for all }%
f\in L^{2}\left( \mathbb{R}^{n};\mu \right) ,
\end{equation*}%
if and only if the following testing condition held:
\begin{equation*}
\int_{Q}\left\vert \mathcal{R}\left( \mathbf{1}_{Q}\mu \right) \left(
x\right) \right\vert ^{2}d\mu \left( x\right) \leq C\int_{Q}d\mu \left(
x\right) ,\ \ \ \ \ \text{for all cubes }Q\text{ in }\mathbb{R}^{n}.
\end{equation*}%
Here the testing functions are $f=\mathbf{1}_{Q}$. The Painlev\'{e} problem
was solved\ in the same year by Tolsa \cite{Tol}, a culmination of an
impressive body of work by many mathematicians.

Finally, building on the work of Nazarov, Treil and Volberg in their 2004
paper \cite{NTV3} on the Hilbert transform, that in turn used the random
dyadic grids of \cite{NTV}\footnote{%
that in turn followed on those of Fefferman and Stein \cite{FeSt}, Garnett
and Jones \cite{GaJo}, and Sawyer \cite{Saw3}}, and the weighted Haar
wavelets of \cite{NTV}\footnote{%
that in turn followed on those of Coifman, Jones and Semmes \cite{CoJoSe}},
the two weight norm inequality for the Hilbert transform was characterized
in 2014 in Lacey, Sawyer, Shen and Uriarte-Tuero \cite{LaSaShUr3}, Lacey
\cite{Lac} and Hyt\"{o}nen \cite{Hyt2} as follows:

\begin{equation}
\int_{\mathbb{R}^{n}}H\left( f\sigma \right) \left( x\right) ^{2}d\omega
\left( x\right) \leq C\int_{\mathbb{R}^{n}}f\left( x\right) ^{2}d\sigma
\left( x\right) ,\ \ \ \ \ \text{for all }f\in L^{2}\left( \sigma \right) ,
\label{eq:twoweightH}
\end{equation}%
if and only if both the strong Muckenhoupt $A_{2}$ condition%
\begin{equation*}
\mathcal{A}_{2}\left( \sigma ,\omega \right) \equiv \sup_{I:\ \text{%
intervals in $\mathbb{R}$}}\int_{\mathbb{R}}\frac{\ell \left( I\right) }{%
\left( \ell \left( I\right) +\left\vert x-c_{I}\right\vert \right) ^{2}}%
d\omega \left( x\right) \cdot \int_{\mathbb{R}}\frac{\ell \left( I\right) }{%
\left( \ell \left( I\right) +\left\vert x-c_{I}\right\vert \right) ^{2}}%
d\sigma \left( x\right) <\infty ,
\end{equation*}%
and the following dual testing conditions hold:%
\begin{equation*}
\int_{I}H\left( \mathbf{1}_{I}\sigma \right) \left( x\right) ^{2}d\omega
\left( x\right) \leq A\int_{I}d\sigma \text{ and }\int_{I}H\left( \mathbf{1}%
_{I}\omega \right) \left( x\right) ^{2}d\sigma \left( x\right) \leq
C\int_{I}d\omega ,\ \ \ \ \ \text{for all intervals }I.
\end{equation*}%
This is also referred as the two weight $T1$ theorem. Notice that here the
two weight inequality \eqref{eq:twoweightH} is written differently than %
\eqref{eq:twoweightM}, so that in the testing condition, one can avoid
requesting the existence of the density of the measure. On the other hand,
the main difference between the two weight $T1$ theorem and the unweighted $%
T1$ theorem is that, in the unweighted case, if one has $L^{2}$ boundedness
for singular integral operators, one also get $L^{p}$ boundedness for all $%
1<p<\infty $, which is not the case in the two weight setting.

The two-weight inequality for the $g$ function was then characterized by
testing conditions in Lacey and Li \cite{LaLi}, and a further extension to a
local $Tb$ theorem for the Hilbert transform is in \cite{SaShUr3}.

\begin{description}
\item[Point of departure] The point of departure for the present paper
begins with the observation that in the one-weight formulation above of the
norm inequality for Riesz transforms by Nazarov, Treil and Volberg, their
testing condition is%
\begin{equation*}
\int_{Q}\left\vert \mathcal{R}\left( \mathbf{1}_{Q}\mu \right) \left(
x\right) \right\vert ^{2}d\mu \left( x\right) \leq C\int_{2Q}d\mu \left(
x\right) ,\ \ \ \ \ \text{for all cubes }Q\text{ in }\mathbb{R}^{n},
\end{equation*}%
where the double $2Q$ of the cube $Q$ appears on the right hand side.
Moreover, one may restrict the testing functions to those functions $f=%
\mathbf{1}_{Q}$ for which $Q$ is a $\mu $\emph{-doubling cube} for some
appropriate positive constant $D$\footnote{%
This philosophy was successfully carried out in the context of the
one-weight $Tb$ theorem for nonhomogeneous square functions by Martikainen,
Mourgoglou and Vuorinen in \cite{MaMoVu}.}:%
\begin{equation*}
\int_{2Q}d\mu \leq D\int_{Q}d\mu .
\end{equation*}%
This then motivates the following problem.
\end{description}

\begin{problem}
To what extent one can similarly restrict testing functions to doubling
cubes for classical operators in the two-weight situations, including those
discussed above?
\end{problem}


\begin{description}
\item[Motivation] Besides the intrinsic interest in minimizing the functions
over which an inequality must be tested in order to verify its validity,
even a partial resolution of the question of restricted testing for singular
integrals has the potential to characterize two weight norm inequalities for
such operators - including Riesz transforms in higher dimensions, currently
a very difficult open problem, see e.g. \cite{SaShUr7}, \cite{LaWi} and \cite%
{LaSaShUrWi}. Indeed, the \emph{nondoubling cubes} have traditionally been
viewed as the enemy in two weight inequalities for singular integrals, and
(the techniques used in) the restriction of the testing conditions to just
doubling cubes could help circumvent the difficulty that \emph{energy
conditions} fail to be necessary for two weight inequalities in higher
dimensions \cite{Saw3} - the point being that a similar restricted energy
condition could suffice. In the current paper, we only work on positive
operators, such as the Hardy-Littlewood maximal function and fractional
integrals, leaving square functions and the Hilbert transform for future
work.
\end{description}

Let $\mathcal{P}^{n}$ be the collection of cubes in $\mathbb{R}^{n}$ with
sides parallel to the coordinate axes, and with side lengths $2^{\ell }$ for
some $\ell \in \mathbb{Z}$. For $Q\in \mathcal{P}^{n}$ and $\Gamma \geq 1$,
let $\Gamma Q$ denote the cube concentric with $Q$ with $\ell \left( \Gamma
Q\right) =\Gamma \ell \left( Q\right) $. For a locally signed measure $\mu $
on $\mathbb{R}^{n}$ (meaning the total variation $\left\vert \mu \right\vert
$ of $\mu $ is locally finite), we define $M(\mu )$ and $I_{\alpha }(\mu )$
at $x\in \mathbb{R}^{n}$ by\footnote{%
The supremum over $Q\in \mathcal{P}^{n}$ used here is pointwise equivalent
to the usual supremum over all cubes $Q$ with sides parallel to the
coordinate axes.}
\begin{equation*}
M(\mu )\left( x\right) \equiv \sup_{\,Q\in \mathcal{P}^{n}:\ x\in Q}\frac{1}{%
\left\vert Q\right\vert }\int_{Q}d\left\vert \mu \right\vert ,\ \ I_{\alpha
}(\mu )(x):=\int_{\mathbb{R}^{n}}\frac{1}{|x-y|^{n-\alpha }}d\mu (y).
\end{equation*}%
Given a pair $\left( \sigma ,\omega \right) $ of weights (i.e. positive
Borel measures) in $\mathbb{R}^{n}$ and $\Gamma >1$, we say that $\left(
\sigma ,\omega \right) $ satisfies the $\Gamma $\emph{-testing condition}
for the maximal function $M$ if there is a constant $\mathfrak{T}_{M}\left(
\Gamma \right) \left( \sigma ,\omega \right) $ such that
\begin{equation}
\int_{Q}\left\vert M\left( \mathbf{1}_{Q}\sigma \right) \right\vert
^{2}d\omega \leq \mathfrak{T}_{M}\left( \Gamma \right) \left( \sigma ,\omega
\right) ^{2}\left\vert \Gamma Q\right\vert _{\sigma }\ ,\ \ \ \ \ \text{for
all }Q\in \mathcal{P}^{n}\ ,  \label{Gamma testing}
\end{equation}%
and we take $\mathfrak{T}_{M}\left( \Gamma \right) \left( \sigma ,\omega
\right) $ to be the least such constant. We define $\mathfrak{T}_{I_{\alpha
}}\left( \Gamma \right) \left( \sigma ,\omega \right) $ anagolously.

There is also the following weaker testing condition, in which one need only
test the inequality over cubes that are `doubling'. Given a pair $\left(
\sigma ,\omega \right) $ of weights in $\mathbb{R}^{n}$ and $D,\Gamma >1$,
we say that $\left( \sigma ,\omega \right) $ satisfies the $D$\emph{-}$%
\Gamma $\emph{-testing condition} for the maximal function $M$ if there is a
constant $\mathfrak{T}_{M}^{D}\left( \Gamma \right) \left( \sigma ,\omega
\right) $ such that
\begin{equation}
\int_{Q}\left\vert M\mathbf{1}_{Q}\sigma \right\vert ^{2}d\omega \leq
\mathfrak{T}_{M}^{D}\left( \Gamma \right) \left( \sigma ,\omega \right)
^{2}\left\vert Q\right\vert _{\sigma }\ ,\ \ \ \ \ \text{for all }Q\in
\mathcal{P}^{n}\text{ with }\left\vert \Gamma Q\right\vert _{\sigma }\leq
D\left\vert Q\right\vert _{\sigma }\ ,  \label{D Gamma testing}
\end{equation}%
and again we take $\mathfrak{T}_{M}^{D}\left( \Gamma \right) \left( \sigma
,\omega \right) $ to be the least such constant, and define $\mathfrak{T}%
_{I_{\alpha }}^{D}\left( \Gamma \right) \left( \sigma ,\omega \right) $
similarly. Note that the $\Gamma $-testing condition implies the $D$-$\Gamma
$-testing condition for all $D>1$.

Unlike the cases of the classical two weight theorem for the maximal
function and fractional integrals in \cite{Saw3, Saw2}, where the testing
condition is already sufficient for the boundedness of the maximal function
and fractional integrals, these restricted testing conditions are not by
themselves sufficient for the norm inequality - the classical Muckenhoupt
condition is needed as well:%
\begin{equation*}
A_{2}\left( \sigma ,\omega \right) :=\sup_{Q\in \mathcal{P}^{n}}\frac{\sigma
(Q)}{\left\vert Q\right\vert }\frac{\omega (Q)}{\left\vert Q\right\vert }%
<\infty ,\ \ A_{2}^{\alpha }\left( \sigma ,\omega \right) :=\sup_{Q\in
\mathcal{P}^{n}}\frac{\sigma (Q)}{\left\vert Q\right\vert ^{1-\frac{\alpha }{%
n}}}\frac{\omega (Q)}{\left\vert Q\right\vert ^{1-\frac{\alpha }{n}}}<\infty
\end{equation*}%
see the counterexample in Section \ref{sec: counterexample maximal} and
Section \ref{sec: frac}, respectively. Finally we let $\mathfrak{N}%
_{M}\left( \sigma ,\omega \right) $ be the best constant (i.e., the $%
L^{2}\left( \sigma \right) \rightarrow L^{2}\left( \omega \right) $ norm of $%
M$) in the inequality%
\begin{equation*}
\int_{\mathbb{R}^{n}}\left\vert M\left( f\sigma \right) \right\vert
^{2}d\omega \leq \mathfrak{N}_{M}\left( \sigma ,\omega \right) ^{2}\int_{%
\mathbb{R}^{n}}\left\vert f\right\vert ^{2}d\sigma ,\ \ \ \ \ \text{for all }%
f\in L^{2}\left( \sigma \right) .
\end{equation*}%
Again we define $\mathfrak{N}_{I_{\alpha }}\left( \sigma ,\omega \right) $
analogously. Our main result for the maximal function is formulated as the
following.

\begin{theorem}
\label{weak}Let $\Gamma >1$. Then there is $D>1$ depending only on $\Gamma $
and the dimension $n$ such that%
\begin{equation*}
\mathfrak{N}_{M}\left( \sigma ,\omega \right) \approx \mathfrak{T}%
_{M}^{D}\left( \Gamma \right) \left( \sigma ,\omega \right) +\sqrt{%
A_{2}\left( \sigma ,\omega \right) },
\end{equation*}%
for all locally finite positive Borel measures $\sigma $ and $\omega $ on $%
\mathbb{R}^{n}$.
\end{theorem}

\begin{remark}
With probability one, a dyadic grid $\mathcal{D}^{\beta }$ in (\ref{def
infinite dyadic grid}) below has the property that every $Q\in \mathcal{D}%
^{\beta }$ has \emph{null boundary}, i.e. $\left\vert \partial Q\right\vert
_{\sigma +\omega }=0$. Thus the supremum over cubes $Q$ in the testing
constant $\mathfrak{T}_{M}^{D}\left( \Gamma \right) \left( \sigma ,\omega
\right) $ in Theorem \ref{weak} may be further restricted to cubes $Q$
having null boundary (\emph{cf.} the one-weight theorem in \cite{MaMoVu}
where this type of reduction first appears).
\end{remark}

See also \cite{LiSa}, \cite{LiSa2} and \cite{ChLa} for earlier related work.
For example, the following weaker `parental' testing result for the maximal
function was proved in \cite{LiSa}, and subsequently given a particularly
simple proof by Chen and Lacey in \cite{ChLa}: The two weight norm
inequality for the maximal function $M$ holds if and only if the following $%
D $\emph{-parental testing condition} holds for some $D>1$,%
\begin{equation*}
\int_{Q}\left\vert M\mathbf{1}_{Q}\sigma \right\vert ^{2}d\omega \leq
\mathfrak{P}_{T}^{D}\left( \sigma ,\omega \right) ^{2}\left\vert
Q\right\vert _{\sigma },\text{ \ for all }Q\in \mathcal{P}^{n}\text{ with }%
\min_{P\text{ is a dyadic parent of }Q}\left\vert P\right\vert _{\sigma
}\leq D\left\vert Q\right\vert _{\sigma }\ .
\end{equation*}
Let us say that a cube $Q$ is {\em $D$-$\Gamma$-doubling} if it satisfies $|\Gamma Q|_\sigma\leq D|Q|_{\sigma}$ (as in \eqref{D Gamma testing}) and the $Q$ is
 {\em $D$-parental doubling} if it satisfies the condition at the end of the previous display. It is easy to see that every $D$-$3$-doubling cube is $D$-parental doubling, but the converse is false in general: $D$-parental doubling means that the measure $\sigma$ is under control on {\em at least one side of $Q$}, while $D$-$\Gamma$-doubling means that it is under control on {\em all sides}. Thus there are many more cubes that one needs to test in the $D$-parental testing condition than in the $D$-$\Gamma$-testing condition \eqref{D Gamma testing}.

The proof of Theorem \ref{weak} splits neatly into two steps. In the first
step, we prove the sufficiency of the $\Gamma $-testing condition
\eqref{Gamma
testing}, which we record as the following theorem

\begin{theorem}
\label{maximal}For $\Gamma >1$ we have%
\begin{equation*}
\mathfrak{N}_{M}\left( \sigma ,\omega \right) \approx \mathfrak{T}_{M}\left(
\Gamma \right) \left( \sigma ,\omega \right) +\sqrt{A_{2}\left( \sigma
,\omega \right) },
\end{equation*}%
for all pairs $\left( \sigma ,\omega \right) $ of locally finite positive
Borel measures on $\mathbb{R}^{n}$, and where the implicit constants of
comparability depend on both $\Gamma $ and dimension $n$.
\end{theorem}

This step requires a careful application of a probabilistic argument of the
type pioneered by Nazarov, Treil and Volberg (\cite{NTV}), and refined in
\cite{Hyt2}. In the second step we use this interim result to establish an
\textit{a priori} bound on the operator norm $\mathfrak{N}_{M}\left( \sigma
,\omega \right) $ in order to absorb additional terms arising from the
absence of any testing condition at all in (\ref{D Gamma testing}) when the
cubes are not doubling. As a consequence of this splitting, we will give the
proof in two stages, beginning with the proof of the following weaker
theorem, which requires probability, and which is then used to prove our
main result Theorem \ref{weak}. We emphasize that this paper is
self-contained.

Our main result for fractional integrals is the following theorem.

\begin{theorem}
\label{weakfrac}Let $\Gamma >1$. Then there is $D>1$ depending only on $%
\Gamma $ and the dimension $n$ such that%
\begin{equation*}
\mathfrak{N}_{I_\alpha}\left( \sigma ,\omega \right) \approx \mathfrak{T}%
_{I_\alpha}^{D}\left( \Gamma \right) \left( \sigma ,\omega \right) +%
\mathfrak{T}_{I_\alpha}^{D}\left( \Gamma \right) \left( \omega,\sigma
\right) +\sqrt{A_{2}^\alpha\left( \sigma ,\omega \right) },
\end{equation*}%
for all locally finite positive Borel measures $\sigma $ and $\omega $ on $%
\mathbb{R}^{n}$.
\end{theorem}

For convenience we will restrict our proof of the above results to the case $%
\Gamma =3$, the general case of $\Gamma $ large being an easy modification
of this one.

\section{Preliminaries}

Here we introduce some standard tools we will use in the proof of Theorem %
\ref{maximal}.

\subsection{Random dyadic grids\label{Sub dyadic}}

In this subsection we introduce the usual random dyadic grids. Let
\begin{equation*}
\mathcal{D}_{0}:=\{2^{-j}([0,1)^{n}+k),j\in \mathbb{Z},k\in \mathbb{Z}^{n}\}.
\end{equation*}%
Then for $\beta =\{\beta _{j}\}_{j=-\infty }^{\infty }\in (\{0,1\}^{n})^{%
\mathbb{Z}}$, define
\begin{equation}
\mathcal{D}^{\beta }:=\left\{ Q+\sum_{j:2^{-j}<\ell (Q)}2^{-j}\beta
_{j},Q\in \mathcal{D}_{0}\right\} .  \label{def infinite dyadic grid}
\end{equation}%
Let $\mathbb{P}$ be the natural product probability measure on $\Omega
:=(\{0,1\}^{n})^{\mathbb{Z}}$. We have the following estimate, which goes
back to Fefferman and Stein \cite[page 112]{FeSt} and also \cite[Lemma 2]%
{Saw3}. For any $\beta\in \Omega $, we denote the associated \emph{dyadic}
maximal operator by
\begin{equation*}
M^{\mathcal{D}^{\beta}}f(x):= \sup_{Q\ni x, Q\in \mathcal{D}^\beta}\frac{1}{\left\vert
Q\right\vert }\int_{Q}\left\vert f\right\vert .
\end{equation*}

\begin{lemma}
\label{domination}For $x\in \mathbb{R}^{n}$ and a positive Borel measure $%
f\geq 0$ on $\mathbb{R}^{n}$ we have%
\begin{equation}
Mf\left( x\right) \leq 2^{4n+1}\mathbb{E}_\beta M^{\mathcal{D}^\beta}f\left(
x\right) .  \label{lim inf average}
\end{equation}
\end{lemma}

\begin{proof}
Fix $x\in \mathbb{R}^{n}$, and let $Q$ be a cube such that $x\in Q$ and
\begin{equation*}
\frac{1}{\left\vert Q\right\vert }\int_{Q}f>\frac{1}{2}Mf\left( x\right) .
\end{equation*}%
Then there exists $Q\subset \widetilde Q \in \mathcal{D}^{\gamma}$ for some $%
\gamma\in \Omega$ such that
\begin{equation*}
\frac{1}{\vert \widetilde Q\vert }\int_{\widetilde Q}f>\frac{1}{2^{n+1}}%
Mf\left( x\right).
\end{equation*}
Since $\gamma$ is fixed, denote $j_0= -\log_2 \ell(\widetilde Q)+1$, we have
\begin{equation*}
\mathbb{P }(\{\beta\in \Omega: \gamma_{j_0}+ \beta_{j_0}=(1, \cdots,
1)\})=2^{-n}.
\end{equation*}
The key is when $\gamma_{j_0}+ \beta_{j_0}=(1, \cdots, 1)$, suppose
\begin{equation*}
\widetilde Q-\sum_{j: 2^{-j}<\ell(\widetilde Q)}2^{-j} \gamma_j+\sum_{j:
2^{-j}<\ell(\widetilde Q)}2^{-j}\beta_j=[a_1, b_1)\times \cdots \times [a_n,
b_n),
\end{equation*}
let $\widetilde Q_{\beta}=J_1\times \cdots \times J_n$ be the cube
satisfying that $J_i= [a_i, 2b_i-a_i)$ if $(\gamma_{j_0})_i=1$ and $%
J_i=[2a_i-b_i, b_i)$ if $(\gamma_{j_0})_i=0$, then $\widetilde
Q_{\beta}\supset \widetilde Q$ and $\widetilde Q_{\beta}\in \mathcal{D}%
^\beta $ with some fixed $\beta_{j_0-1}$ (whose precise value depends on $%
\gamma_{j_0}$ and the standard dyadic cube $\widetilde Q-\sum_{j:
2^{-j}<\ell(\widetilde Q)}2^{-j} \gamma_j$). This implies
\begin{equation*}
\mathbb{P}(\{\beta\in \Omega: M^{\mathcal{D}^\beta}f> \frac 1{2^{2n+1}}
Mf\})\ge 4^{-n}
\end{equation*}
which completes the proof of (\ref{lim inf average}).
\end{proof}

\subsection{Whitney decompositions}

In order to apply probabilistic method, here we use a slightly weaker
version of Whitney decomposition, which adapts to the probabilistic method
quite naturally. Indeed, given an open set $\Omega$ we let $\{Q_j\}_j$ be
the collection of dyadic cubes such that

\begin{enumerate}
\item $Q_j\subset \Omega$;

\item $10 \sqrt n\ell(Q_j)<\mathop{\rm{dist}}(Q_j, \Omega^c)\le 21\sqrt
n\ell(Q_j)$.
\end{enumerate}

Notice that we do not request $Q_j$ to be the maximal dyadic cube such that
the above properties holds. With this definition, we prove the following
proposition.

\begin{proposition}
\label{prop:whitney} Let $\Omega$ be an open set and $\{Q_j\}$ be the
related Whitney cubes defined as the above. Then there holds

\begin{enumerate}
\item[(i)] $\Omega\subset \cup_j Q_j$;

\item[(ii)] $\sum_j \chi_{Q_j}\le 2$;

\item[(iii)] $\sum_j \chi_{3Q_j}\le c_n$.
\end{enumerate}
\end{proposition}

\begin{proof}
To prove the first assertion, given $x$ and a dyadic cube $Q$ with $x\in
Q\subsetneq \Omega$, notice that
\begin{align*}
\frac 12\cdot\frac{\mathop{\rm{dist}}(Q, \Omega^c)}{\ell(Q)} -\frac{\sqrt n}%
2 \le \frac{\mathop{\rm{dist}}(Q^{(1)}, \Omega^c)}{2\ell(Q)} \le \frac
12\cdot\frac{\mathop{\rm{dist}}(Q, \Omega^c)}{\ell(Q)}.
\end{align*}
Obviously, we can take a dyadic cube $Q_0\ni x$ such that $\frac{%
\mathop{\rm{dist}}(Q_0, \Omega^c)}{\ell(Q_0)}$ is sufficiently big, with the
above estimates, there must exists a cube $Q_x\ni x$ such that
\begin{equation*}
10 \sqrt n\ell(Q_x)<\mathop{\rm{dist}}(Q_x, \Omega^c)\le 21\sqrt n\ell(Q_x).
\end{equation*}
Now we turn to prove (ii). Fix $x\in \Omega$, let $d=\mathop{\rm{dist}}(x,
\Omega^c) $ and $x\in Q_j$, then
\begin{equation*}
d\ge \mathop{\rm{dist}}(Q_j, \Omega^c) > 10 \sqrt n\ell(Q_j).
\end{equation*}
On the other hand,
\begin{align*}
d\le \mathop{\rm{dist}}(Q_j, \Omega^c) + \sqrt n \ell(Q_j)\le 22\sqrt n
\ell(Q_j).
\end{align*}
Combing the above two estimates one obtains that
\begin{equation*}
\frac d{22\sqrt n }\le \ell(Q_j)< \frac d{10\sqrt n}.
\end{equation*}
Now since $Q_j$ is a dyadic cube, we get (ii). Finally, let us fix $x$ and $%
3Q_j\ni x$. If $Q_l$ is a Whitney cube such that $x\in 3Q_l$, then we have
\begin{equation*}
21\sqrt n\ell(Q_l)\ge \mathop{\rm{dist}}(Q_l, \Omega^c)\ge \mathop{\rm{dist}}%
(Q_j, \Omega^c)- 2 \sqrt n (\ell(Q_j)+ \ell(Q_l))> 8\sqrt n \ell(Q_j)-2\sqrt
n \ell(Q_l).
\end{equation*}%
Likewise, we also have $23\ell(Q_j)>8\ell(Q_l)$. Since $\mathop{\rm{dist}}%
(Q_j, Q_l)\le 2\sqrt n(\ell(Q_j)+\ell(Q_l))$, we completes the proof of
(iii).
\end{proof}

\subsection{Good/bad cubes}

We call a cube $Q\in \mathcal{D}^{\gamma }$ is good, if for any $R\in
\mathcal{D}^{\gamma }$ with $\ell (R)\geq 2^{r}\ell (Q)$, there holds
\begin{equation*}
\mathrm{dist}(Q,\partial R)>\ell (Q)^{\frac{1}{2}}\ell (R)^{\frac{1}{2}}.
\end{equation*}%
Otherwise we call $Q$ is bad. We note that $\pi _{\mathrm{good}}:=\mathbb{P}%
_{\gamma }(Q+\sum_{j:2^{-j}<\ell (Q)}2^{-j}\gamma _{j}$ is good$)$ is
independent of $Q\in \mathcal{D}_{0}$. It is well-known that if $r$ is large
enough, then $\pi _{\mathrm{good}}>0$. So without loss of generality, we can
assume $r>4$.

\section{Strong triple testing}

\label{sec: proof maximal}

\subsection{A counterexample}

\label{sec: counterexample maximal} We begin this section with the following
counterexample, which shows that it is necessary to assume the Muckenhoupt $%
A_2$ condition for the characterization.

\begin{example}
\label{no control}Define%
\begin{eqnarray*}
d\sigma \left( y\right) &=&e^{y}dy, \\
d\omega \left( x\right) &=&\mathbf{1}_{\left[ 0,1\right] }\left( x\right) dx.
\end{eqnarray*}%
Then%
\begin{equation*}
\mathfrak{N}_{M}\left( \sigma ,\omega \right) \geq A_{2}\left( \sigma
,\omega \right) \geq \sup_{R>1}\frac{\left\vert \left[ 0,R\right]
\right\vert _{\omega }}{\left\vert R\right\vert }\frac{\left\vert \left[ 0,R%
\right] \right\vert _{\sigma }}{\left\vert R\right\vert }=\sup_{R>1}\frac{1}{%
R}\frac{e^{R}-1}{R}=\infty ,
\end{equation*}%
and%
\begin{equation*}
\mathfrak{T}_{M}\left( 3\right) \left( \sigma ,\omega \right) \lesssim 1.
\end{equation*}%
Indeed, without loss of generality, $I=\left[ a,b\right] $ with $I\cap \left[
0,1\right] \neq \emptyset $ (since otherwise $\mathbf{1}_{I}\omega =0$ and $%
\int_{I}\left\vert M\left( \mathbf{1}_{I}\sigma \right) \right\vert
^{2}d\omega =0$) and so%
\begin{equation}
a<1\text{ and }b>0.  \label{necc}
\end{equation}%
Now we assume (\ref{necc}) and compute $\frac{1}{\left\vert 3I\right\vert
_{\sigma }}\int_{I}\left\vert M\left( \mathbf{1}_{I}\sigma \right)
\right\vert ^{2}d\omega $ in two cases.

\begin{enumerate}
\item Case $b>2$: In this case we have $M\left( \mathbf{1}_{I}\sigma \right)
\left( x\right) =\frac{\int_{x}^{b}e^{y}dy}{b-x}\leq \frac{e^{b}-1}{b-1}$
for $0\leq x\leq 1$, and so
\begin{eqnarray*}
&&\int_{I}\left\vert M\left( \mathbf{1}_{I}\sigma \right) \right\vert
^{2}d\omega \leq \int_{0}^{1}\left( \frac{e^{b}-1}{b-1}\right) ^{2}dx\approx
\frac{e^{2b}}{b^{2}}, \\
&&\left\vert 3I\right\vert _{\sigma }=\int_{2a-b}^{2b-a}d\sigma \geq
\int_{2b-a-1}^{2b-a}e^{y}dy\approx e^{2b-a}, \\
&\Longrightarrow &\frac{1}{\left\vert 3I\right\vert _{\sigma }}%
\int_{I}\left\vert M\left( \mathbf{1}_{I}\sigma \right) \right\vert
^{2}d\omega \lesssim \frac{\frac{e^{2b}}{b^{2}}}{e^{2b-a}}=\frac{e^{a}}{b^{2}%
}\leq 1.
\end{eqnarray*}

\item Case $b\leq 2$: In this case we have $M\left( \mathbf{1}_{I}\sigma
\right) \left( x\right) =\frac{\int_{x}^{b}e^{y}dy}{b-x}\leq e^{2}$ for $%
0\leq x\leq 1$, and so we consider two subcases.

\begin{enumerate}
\item Subcase $a\geq -1$:%
\begin{eqnarray*}
&&\int_{I}\left\vert M\left( \mathbf{1}_{I}\sigma \right) \right\vert
^{2}d\omega \leq e^{2}\left\vert I\cap \left[ 0,1\right] \right\vert \\
&&\left\vert 3I\right\vert _{\sigma }=\int_{2a-b}^{2b-a}e^{y}dy\geq
e^{2a-b}3\left( b-a\right) \geq 3e^{-4}\left( b-a\right) , \\
&\Longrightarrow &\frac{1}{\left\vert 3I\right\vert _{\sigma }}%
\int_{I}\left\vert M\left( \mathbf{1}_{I}\sigma \right) \right\vert
^{2}d\omega \leq \frac{e^{2}\left\vert I\cap \left[ 0,1\right] \right\vert }{%
3e^{-4}\left( b-a\right) }\leq \frac{e^{6}}{3}.
\end{eqnarray*}

\item Subcase $a<-1$: In this subcase we also have $b-a>0-\left( -1\right)
=1 $ and so%
\begin{eqnarray*}
&&\int_{I}\left\vert M\left( \mathbf{1}_{I}\sigma \right) \right\vert
^{2}d\omega \leq e^{2}\left\vert I\cap \left[ 0,1\right] \right\vert \\
&&\left\vert 3I\right\vert _{\sigma }=\int_{2a-b}^{2b-a}e^{y}dy\geq
\int_{b-1}^{b}e^{y}dy=e^{b}-e^{b-1}\geq 1-e^{-1}, \\
&\Longrightarrow &\frac{1}{\left\vert 3I\right\vert _{\sigma }}%
\int_{I}\left\vert M\left( \mathbf{1}_{I}\sigma \right) \right\vert
^{2}d\omega \leq \frac{e^{2}\left\vert I\cap \left[ 0,1\right] \right\vert }{%
1-e^{-1}}\leq \frac{e^{2}}{1-e^{-1}}.
\end{eqnarray*}
\end{enumerate}
\end{enumerate}
\end{example}

On the other hand, if we interchange these measures, then we have $\mathfrak{%
T}_{M}\left( 3\right)(\omega,\sigma) =\infty $ since with $I=\left[ 0,R%
\right] $ and $R>2$, we have%
\begin{equation*}
\frac{1}{\left\vert 3I\right\vert _{\omega }}\int_{I}\left\vert M\left(
\mathbf{1}_{I}\omega \right) \right\vert ^{2}d\sigma \gtrsim \int_{\left[ 1,R%
\right] }\left\vert \frac{\int_{0}^{1}dx}{y}\right\vert
^{2}e^{y}dy=\int_{1}^{R}\frac{e^{y}}{y^{2}}dy\approx \frac{e^{R}}{R^{2}}\ .
\end{equation*}


\subsection{Proof of Theorem \protect\ref{maximal}}

Now we begin the proof of Theorem \ref{maximal}, the `only if' part is
trivial, so we only focus on the `if' part. We shall prove
\begin{equation}
\mathfrak{N}_{M}\left( \sigma ,\omega \right) \lesssim \mathfrak{T}%
_{M}\left( 3\right) \left( \sigma ,\omega \right) +\sqrt{ A_{2}\left( \sigma
,\omega \right) }.  \label{it suffices}
\end{equation}%
Fix $f$ nonnegative and bounded with compact support, say $\mathop{\rm{supp}}%
f\subset Q(0,R)=\left[ -R,R\right] ^{n}$. Since $M\left( f\sigma \right) $
is lower semicontinuous, the set $\Omega _{k}:= \left\{ M\left( f\sigma
\right) >2^{k}\right\} $ is open and we can consider the Whitney
decomposition of the open set $\Omega _{k}$ into the union $%
\bigcup\limits_{j\in \mathbb{N}}Q_{j}^{k}$ of $\mathcal{D}^{\gamma }$-dyadic
intervals $Q_{j}^{k}$ with the properties as in Proposition \ref%
{prop:whitney}, where $\gamma \in \Omega$. In the sequel, we will use $%
\mathcal{W}_k^\gamma$ to denote the Whitney cubes of $\Omega_k$ in $\mathcal{%
D}^\gamma$. We now use random grids to obtain from Lemma \ref{domination}
that%
\begin{equation*}
M\left( f\sigma \right) \left( x\right) \lesssim \mathbb{E}_{\gamma }M^{%
\mathcal{D}^\gamma}f\left( x\right) \ ,\ \ \ \ \ x\in \mathbb{R}^{n}.
\end{equation*}

Notice that if we replace $\omega $ by $\omega _{N}=\omega \mathbf{1}%
_{Q(0,N)}$ with $N>R$, we have
\begin{equation*}
\int M\left( f\sigma \right) ^{2}d\omega _{N}\leq \Vert f\Vert _{L^{\infty
}}^{2}\int_{Q(0,N)}M(\mathbf{1}_{Q(0,N)}\sigma )^{2}d\omega \leq \Vert
f\Vert _{L^{\infty }}^{2}\mathfrak{T}_{M}^{2}\left\vert 3Q(0,N)\right\vert
_{\sigma }<\infty ,
\end{equation*}%
and therefore, without loss of generality, we can assume
\begin{equation*}
\int M(f\sigma )^{2}d\omega <\infty .
\end{equation*}
We now have
\begin{align*}
\mathbb{E}_{\gamma }\int_{\mathbb{R}^{n}}\left[ M^{\mathcal{D}^{\gamma
}}\left( f\sigma \right) \left( x\right) \right] ^{2}d\omega \left( x\right)
& \leq \mathbb{E}_{\gamma }C_{n}\sum_{k\in \mathbb{Z}}2^{2(k+m)}\left\vert
\left\{ M^{\mathcal{D}^{\gamma }}\left( f\sigma \right) >2^{k+m}\right\}
\right\vert _{\omega } \\
& =\mathbb{E}_{\gamma }C_{n}\sum_{k\in \mathbb{Z},\ j\in \mathbb{N}%
}2^{2(k+m)}\left\vert Q_{j}^{k}\cap \Omega _{k+m}^{\gamma }\right\vert
_{\omega } \\
& \leq C_{n,m}\mathbb{E}_{\gamma }\sum_{k\in \mathbb{Z},\ j\in \mathbb{N}%
}2^{2k}\left\vert E_{j,\gamma }^{k}\right\vert _{\omega
}+3^{n}C_{n}2^{-2m_{0}}\int \left[ M\left( f\sigma \right) \right]
^{2}d\omega \ ,
\end{align*}%
where%
\begin{equation*}
E_{j,\gamma }^{k}:=Q_{j}^{k}\cap \left( \Omega _{k+m}^{\gamma }\setminus
\Omega _{k+m+m_{0}}\right) ,\,\,\Omega _{k+m}^{\gamma }=\left\{ x:M^{%
\mathcal{D}^{\gamma }}\left( f\sigma \right) >2^{k+m}\right\} ,
\end{equation*}%
and we shall choose $m_{0}$ to be sufficiently large so that the second term
can be absorbed (since it is finite). So the goal is to prove
\begin{equation*}
\mathbb{E}_{\gamma }\sum_{k\in \mathbb{Z},\ j\in \mathbb{N}}2^{2k}\left\vert
E_{j,\gamma }^{k}\right\vert _{\omega }\lesssim \big( \mathfrak{T}_{M}\left(
3\right) \left( \sigma ,\omega \right) ^{2}+A_{2}\left( \sigma ,\omega
\right) \big) \Vert f\Vert _{L^{2}(\sigma )}^{2}.
\end{equation*}

We claim the maximum principle,%
\begin{equation}
2^{k+m-1}<M_{Q_{j}^{k}}^{\mathcal{D}^{\gamma }}(f\sigma )(x):=\sup_{Q\in
\mathcal{D}^{\gamma },x\in Q\subseteq Q_{j}^{k}}\frac{1}{|Q|}\int_{Q}f\sigma
,\qquad x\in E_{j,\gamma }^{k}.  \label{max princ}
\end{equation}%
Indeed,
\begin{equation}
\begin{split}
M^{\mathcal{D}^{\gamma }}(f\sigma )(x)& \leq M^{\mathcal{D}^{\gamma
}}(1_{(Q_{j}^{k})^{c}}f\sigma )(x)+M^{\mathcal{D}^{\gamma
}}(1_{Q_{j}^{k}}f\sigma )(x) \\
& \leq M^{\mathcal{D}^{\gamma }}(1_{(Q_{j}^{k})^{c}}f\sigma )(x)+\sup_{Q\in
\mathcal{D}^{\gamma }:Q\supsetneq Q_{j}^{k}}\frac{1}{|Q|}\int_{Q}f\sigma
+M_{Q_{j}^{k}}^{\mathcal{D}^{\gamma }}(f\sigma )(x).
\end{split}
\label{eq:Msplit}
\end{equation}

Given $x\in E_{j,\gamma }^{k}$, there is $Q\in \mathcal{D}^{\gamma }$ with $%
x\in Q$ and $Q\cap \left( Q_{j}^{k}\right) ^{c}\neq \emptyset $ (which
implies that $Q_{j}^{k}\subset Q$), and also $z\in \Omega _{k}^{c}$, such
that
\begin{eqnarray}
M^{\mathcal{D}^{\gamma }}\big(\mathbf{1}_{\left( Q_{j}^{k}\right)
^{c}}f\sigma \big)\left( x\right) &\leq &2\frac{1}{\left\vert Q\right\vert }%
\int_{Q\setminus Q_{j}^{k}}f\sigma \leq 2\frac{1}{\left\vert Q\right\vert }%
\int_{50\sqrt{n}Q}f\sigma  \label{z exists} \\
&=&\frac{2(50\sqrt{n})^{n}}{\left\vert 50\sqrt{n}Q\right\vert }\int_{50\sqrt{%
n}Q}f\sigma \leq 2(50\sqrt{n})^{n}M\left( f\sigma \right) \left( z\right)
\leq 2^{k+m-2}  \notag
\end{eqnarray}%
if we choose $m>1$ large enough.

This same computation shows that the cubes $Q\supsetneq Q^k_j$ in the second
term on the right of \eqref{eq:Msplit} satisfy
\begin{equation*}
\frac{1}{|Q|}\int_Q f\sigma \leq 2^{k+m-2}.
\end{equation*}

Now we use $2^{k+m}<M^{\mathcal{D}^{\gamma }}\left( f\sigma \right) \left(
x\right) $ for $x\in E_{j}^{k}$ to obtain
\begin{equation}
\begin{split}
2^{k+m}<M^{\mathcal{D}^{\gamma }}(f\sigma )(x)& \leq M^{\mathcal{D}^{\gamma
}}(1_{(Q_{j}^{k})^{c}}f\sigma )(x)+\sup_{Q\in \mathcal{D}^{\gamma
}:Q\supsetneq Q_{j}^{k}}\frac{1}{|Q|}\int_{Q}f\sigma +M_{Q_{j}^{k}}^{%
\mathcal{D}^{\gamma }}(f\sigma )(x) \\
& \leq 2^{k+m-2}+2^{k+m-2}+M_{Q_{j}^{k}}^{\mathcal{D}^{\gamma }}(f\sigma
)(x)=2^{k+m-1}+M_{Q_{j}^{k}}^{\mathcal{D}^{\gamma }}(f\sigma )(x).
\end{split}
\label{obtain}
\end{equation}

Hence we have proved the maximum principle. Thus set
\begin{equation*}
\widetilde{\Omega }_{k+m}^{\gamma }:=\{M_{Q_{j}^{k}}^{D^{\gamma }}(f\sigma
)>2^{k+m-1}\},\ \ \widetilde{E}_{j,\gamma }^{k}:=Q_{j}^{k}\cap (\widetilde{%
\Omega }_{k+m}^{\gamma }\setminus \Omega _{k+m+m_{0}})
\end{equation*}%
then $E_{j,\gamma }^{k}\subset \widetilde{E}_{j,\gamma }^{k}$ and the latter
depends only on the cube $Q_{j}^{k}$ (not on its parents or ancestors in the
dyadic system $\mathcal{D}^{\gamma }$) so in particular it is independent of
the goodness of $Q_{j}^{k}$. Thus (see e.g. \cite{HyPeTrVo})
\begin{equation*}
\mathbb{E}_{\gamma }\sum_{k,j}2^{2k}|E_{j,\gamma }^{k}|_{\omega }\leq
\mathbb{E}_{\gamma }\sum_{k,j}2^{2k}|\widetilde{E}_{j,\gamma }^{k}|_{\omega
}=\pi _{\mathrm{good}}^{-1}\mathbb{E}_{\gamma }\sum_{k,j}2^{2k}|\widetilde{E}%
_{j,\gamma }^{k}|_{\omega }1_{Q_{j}^{k}\text{ good}}.
\end{equation*}

%

We now introduce some further notation which will play a crucial role below.
Let%
\begin{eqnarray*}
\mathcal{H}_{j}^{k}:= &&\left\{ M_{Q_{j}^{k}}^{\mathcal{D}^{\gamma }}\left(
\mathbf{1}_{Q_{j}^{k}}f\sigma \right) >2^{k+m-1}\right\} , \\
\mathcal{H}_{j,\mathrm{out}}^{k}:= &&\left\{ M_{Q_{j}^{k}}^{\mathcal{D}%
^{\gamma }}\left( \mathbf{1}_{Q_{j}^{k}\setminus \Omega _{k+m+m_{0}}}f\sigma
\right) >2^{k+m-2}\right\} , \\
\mathcal{H}_{j,\mathrm{in}}^{k}:= &&\left\{ M_{Q_{j}^{k}}^{\mathcal{D}%
^{\gamma }}\left( \mathbf{1}_{Q_{j}^{k}\cap \Omega _{k+m+m_{0}}}f\sigma
\right) >2^{k+m-2}\right\} ,
\end{eqnarray*}%
so that $\mathcal{H}_{j}^{k}\subset \mathcal{H}_{j,\mathrm{out}}^{k}\cup
\mathcal{H}_{j,\mathrm{in}}^{k}$. We are here suppressing the dependence of $%
\mathcal{H}_{j}^{k}$ on $\gamma \in \Omega $.

We will now follow the main idea for fractional integrals in \cite{Saw2},
but with two main changes:

\begin{enumerate}
\item \textbf{Sublinearizations}: Since $M$ is not linear, the duality
arguments in \cite{Saw2} require that we construct symmetric linearizations $%
L$ that are dominated by $M$, and

\item \textbf{Tripling decompositions}: In order to exploit the triple
testing conditions we introduce Whitney grids, and construct stopping times
for tripling cubes, which entails some combinatorics. In particular, most of
our effort is spent on decomposing and controlling the analogue of term $IV$
from \cite{Saw2} using good and bad cubes.
\end{enumerate}

Now take $0<\beta <1$ to be chosen later, and consider the following three
exhaustive cases for $Q_{j}^{k}$ and $\widetilde E_{j,\gamma}^{k}$.

(\textbf{1}): $Q_j^k$ is good and $\vert \widetilde E_{j,\gamma}^{k}\vert
_{\omega }<\beta \vert 3Q_{j}^{k}\vert _{\omega }$, in which case we say $%
(k,j)\in \Pi _{1}$,

(\textbf{2}): $Q_j^k$ is good and $\vert \widetilde E_{j,\gamma}^{k}\vert
_{\omega }\geq \beta \vert 3Q_{j}^{k}\vert _{\omega }$\ and $\vert
\widetilde E_{j,\gamma}^{k}\cap \mathcal{H}_{j,\mathrm{out}}^{k}\vert
_{\omega }\geq \frac{1}{2}\vert \widetilde E_{j,\gamma}^{k}\vert _{\omega }$%
, say $(k,j)\in \Pi _{2}$,

(\textbf{3}): $Q_j^k$ is good and $\vert \widetilde E_{j,\gamma}^{k}\vert
_{\omega }\geq \beta \vert 3Q_{j}^{k}\vert _{\omega }$\ and $\vert
\widetilde E_{j,\gamma}^{k}\cap \mathcal{H}_{j,\mathrm{in}}^{k}\vert
_{\omega }\geq \frac{1}{2}\vert \widetilde E_{j,\gamma}^{k}\vert _{\omega }$%
, say $(k,j)\in \Pi _{3}$.

\subsection{The three cases}

The first case is trivially handled, the second case is easy, and the third
case consumes most of our effort.

\textbf{Case (1)}: The treatment of case (1) is easy by absorption. Indeed,
\begin{equation}
\sum_{(k,j)\in \Pi _{1}}2^{2k}\vert \widetilde E_{j,\gamma}^{k}\vert
_{\omega } \lesssim \sum_{k\in \mathbb{Z},\ j\in \mathbb{N}}2^{2k}\beta
\left\vert 3Q_{j}^{k}\right\vert _{\omega }\lesssim \beta \int M\left(
f\sigma \right) ^{2}d\omega ,  \label{case 1 est}
\end{equation}%
and then it suffices to take $\beta $ sufficiently small at the end of the
proof.

\textbf{Case (2)}: In case (2) we have%
\begin{equation}
\sum_{(k,j)\in \Pi _{2}}2^{2k}|\widetilde{E}_{j,\gamma }^{k}|_{\omega
}\lesssim \sum_{(k,j)\in \Pi _{2}}2^{k}\int \mathbf{1}_{\widetilde{E}%
_{j,\gamma }^{k}}\mathcal{L}_{j}^{k}\left( \mathbf{1}_{Q_{j}^{k}\setminus
\Omega _{k+m+m_{0}}}f\sigma \right) d\omega .  \label{proceed}
\end{equation}%
Here the positive linear operator $\mathcal{L}_{j}^{k}$ given by
\begin{equation*}
\mathcal{L}_{j}^{k}\left( h\sigma \right) \left( x\right) :=\sum_{\ell
=1}^{\infty }\frac{1}{|I_{j}^{k}(\ell )|}\int_{I_{j}^{k}(\ell )}hd\sigma
\mathbf{1}_{I_{j}^{k}(\ell )}(x),
\end{equation*}%
where $I_{j}^{k}\left( \ell \right) \in \mathcal{D}^{\gamma }(Q_{j}^{k})$
are the maximal dyadic cubes contained in $\mathcal{H}_{j,\mathrm{out}}^{k}$%
, which implies that $\mathcal{L}_{j}^{k}(\mathbf{1}_{Q_{j}^{k}\setminus
\Omega _{k+m+m_{0}}}f\sigma )\eqsim 2^{k}\mathbf{1}_{\mathcal{H}_{j,\mathrm{%
out}}^{k}}$. Indeed, as we have calculated,
\begin{equation}
\frac{1}{|Q_{j}^{k}|}\int_{Q_{j}^{k}}f\sigma \leq 2^{k+m-2},  \label{already}
\end{equation}%
so in particular, $I_{j}^{k}(\ell )$'s are proper subcubes of $Q_{j}^{k}$
and the claim follows. Now we can continue from (\ref{proceed}) as follows:%
\begin{align*}
\sum_{(k,j)\in \Pi _{2}}2^{k}\int_{\widetilde{E}_{j,\gamma }^{k}}\mathcal{L}%
_{j}^{k}\big(\mathbf{1}_{Q_{j}^{k}\setminus \Omega _{k+m+m_{0}}}& f\sigma %
\big)d\omega =\sum_{(k,j)\in \Pi _{2}}2^{k}\int_{Q_{j}^{k}\setminus \Omega
_{k+m+m_{0}}}\mathcal{L}_{j}^{k}\big(\mathbf{1}_{\widetilde{E}_{j,\gamma
}^{k}}\omega \big)fd\sigma \\
& \leq \sum_{(k,j)\in \Pi _{2}}2^{k}\Big(\int_{Q_{j}^{k}\setminus \Omega
_{k+m+m_{0}}}\mathcal{L}_{j}^{k}\big(\mathbf{1}_{\widetilde{E}_{j,\gamma
}^{k}}\omega \big)^{2}d\sigma \Big)^{\frac{1}{2}}\Big(\int_{Q_{j}^{k}%
\setminus \Omega _{k+m+m_{0}}}f^{2}d\sigma \Big)^{\frac{1}{2}} \\
& \leq \Big(\sum_{(k,j)\in \Pi _{2}}2^{2k}\int_{Q_{j}^{k}\setminus \Omega
_{k+m+m_{0}}}\mathcal{L}_{j}^{k}\big(\mathbf{1}_{\widetilde{E}_{j,\gamma
}^{k}}\omega \big)^{2}d\sigma \Big)^{\frac{1}{2}}\Big(\sum_{(k,j)\in \Pi
_{2}}\int_{Q_{j}^{k}\setminus \Omega _{k+m+m_{0}}}f^{2}d\sigma \Big)^{\frac{1%
}{2}} \\
& \leq \Big(\sum_{(k,j)\in \Pi _{2}}2^{2k}\int_{Q_{j}^{k}}\mathcal{L}_{j}^{k}%
\big(\mathbf{1}_{Q_{j}^{k}}\omega \big)^{2}d\sigma \Big)^{\frac{1}{2}}\Big(%
\sum_{k\in \mathbb{Z}}\int_{\Omega _{k}\setminus \Omega
_{k+m+m_{0}}}2f^{2}d\sigma \Big)^{\frac{1}{2}} \\
& \leq C_{m,m_{0}}A_{2}^{\frac{1}{2}}\Big(\sum_{(k,j)\in \Pi
_{2}}2^{2k}\left\vert Q_{j}^{k}\right\vert _{\omega }\Big)^{\frac{1}{2}%
}\Vert f\Vert _{L^{2}(\sigma )} \\
& \leq \beta ^{-\frac{1}{2}}C_{m,m_{0}}A_{2}^{\frac{1}{2}}\Big(%
\sum_{(k,j)\in \Pi _{2}}2^{2k}|\widetilde{E}_{j,\gamma }^{k}|_{\omega }\Big)%
^{\frac{1}{2}}\Vert f\Vert _{L^{2}(\sigma )},
\end{align*}%
where we have used the following trivial estimate
\begin{equation}
\int_{Q_{j}^{k}}\mathcal{L}_{j}^{k}\big(\mathbf{1}_{Q_{j}^{k}}\omega \big)%
^{2}d\sigma \leq \sum_{\ell =1}^{\infty }\frac{|I_{j}^{k}(\ell )|_{\omega
}|I_{j}^{k}(\ell )|_{\sigma }}{|I_{j}^{k}(\ell )|^{2}}|I_{j}^{k}(\ell )\cap
Q_{j}^{k}|_{\omega }\leq A_{2}|Q_{j}^{k}|_{\omega }.  \label{eq:a2ljk}
\end{equation}%
Then immediately we get
\begin{equation}
\sum_{(k,j)\in \Pi _{2}}2^{2k}|\widetilde{E}_{j,\gamma }^{k}|_{\omega }\leq
\beta ^{-1}C_{m+m_{0}}^{2}A_{2}\Vert f\Vert _{L^{2}(\sigma )}^{2}.
\label{case 2 est}
\end{equation}

\textbf{Case (3)}: For this case, we let $\{I_{j}^{k}(\ell )\}_{\ell }$ be
the collection of the maximal dyadic cubes in $\mathcal{H}_{j,\mathrm{in}%
}^{k}$ and define $\mathcal{L}_{j}^{k}$ similarly. Then likewise, $\mathcal{L%
}_{j}^{k}(\mathbf{1}_{Q_{j}^{k}\cap \Omega _{k+m+m_{0}}}f\sigma )\eqsim 2^{k}%
\mathbf{1}_{\mathcal{H}_{j,\mathrm{in}}^{k}}$ and therefore,
\begin{eqnarray*}
\sum_{(k,j)\in \Pi _{3}}2^{2k} \vert \widetilde E_{j,\gamma}^{k} \vert
_{\omega } &\lesssim &\sum_{(k,j)\in \Pi _{3}}2^{k}\int_{\widetilde
E_{j,\gamma}^{k}}\mathcal{L}_{j}^{k}\left( \mathbf{1}_{Q_{j}^{k}\cap \Omega
_{k+m+m_{0}}}f\sigma \right) d\omega \\
&=&\sum_{(k,j)\in \Pi _{3}}2^{k}\int_{Q_{j}^{k}\cap \Omega _{k+m+m_{0}}}%
\mathcal{L}_{j}^{k}\left( \mathbf{1}_{\widetilde E_{j,\gamma}^{k}}\omega
\right) fd\sigma \\
&=&\sum_{(k,j)\in \Pi _{3}}2^{k}\sum_{i\in \mathbb{N}:\
Q_{i}^{k+m+m_{0}}\subset Q_{j}^{k}}\int_{Q_{i}^{k+m+m_{0}}}\mathcal{L}%
_{j}^{k}\left( \mathbf{1}_{\widetilde E_{j,\gamma}^{k}}\omega \right)
fd\sigma .
\end{eqnarray*}

Before moving on, let us make some observations. Since we only need to
consider $I_{j}^{k}(\ell )$ such that $I_{j}^{k}(\ell )\cap \widetilde
E_{j,\gamma}^{k}\neq \emptyset $, we have $I_{j}^{k}(\ell )\not\subset
\Omega _{k+m+m_{0}}$. Therefore, if we fix $Q_{i}^{k+m+m_{0}}$, only those $%
I_{j}^{k}(\ell )$ such that $Q_{i}^{k+m+m_{0}}\subset I_{j}^{k}(\ell )$
contribute to $\mathcal{L}_{j}^{k}$. In other words, $\mathcal{L}_{j}^{k}%
\big( \mathbf{1}_{\widetilde E_{j,\gamma}^{k}}\omega \big) $ is constant on $%
Q_{i}^{k+m+m_{0}}$. Set
\begin{equation}
A_{j}^{k}:=\frac{1}{\left\vert Q_{j}^{k}\right\vert _{\sigma }}%
\int_{Q_{j}^{k}}fd\sigma .  \label{not con}
\end{equation}%
We have
\begin{align*}
\sum_{(k,j)\in \Pi _{3}}2^{2k}\left\vert \widetilde
E_{j,\gamma}^{k}\right\vert _{\omega } & \lesssim \sum_{(k,j)\in \Pi
_{3}}2^{k}\sum_{i\in \mathbb{N}:\ Q_{i}^{k+m+m_{0}}\subset
Q_{j}^{k}}A_{i}^{k+m+m_{0}}\int_{Q_{i}^{k+m+m_{0}}}\mathcal{L}_{j}^{k}\big(
\mathbf{1}_{\widetilde E_{j,\gamma}^{k}}\omega \big) \sigma \\
& =\lim_{N_0\rightarrow -\infty }\sum_{\substack{ k\in \mathbb{Z},k\geq N_0
\\ j\in \mathbb{N},(k,j)\in \Pi _{3}}}2^{k}\sum_{i\in \mathbb{N}:\
Q_{i}^{k+m+m_{0}}\subset Q_{j}^{k}}A_{i}^{k+m+m_{0}}\int_{Q_{i}^{k+m+m_{0}}}%
\mathcal{L}_{j}^{k}\big( \mathbf{1}_{\widetilde E_{j,\gamma}^{k}}\omega %
\big) \sigma .
\end{align*}%
We make a convention that the summation over $k$ is understood as $k\equiv
k_{0}\,\mathrm{mod}\,(m+m_{0})$ for some fixed $0\leq k_{0}\leq m+m_{0}-1$,
and since we are summing over products with factor $|\widetilde
E_{j,\gamma}^{k}|_{\omega }$, without loss of generality we only consider $%
Q_{j}^{k}$ for the largest $k$ if it is repeated, and define
\begin{equation*}
\mathscr{W}^{\gamma }:=\{Q_{j}^{k}\in \mathcal{D}^\gamma:k\equiv k_{0}\,%
\mathrm{mod}\,\left( m+m_{0}\right) ,k\geq N_0, (k,j)\in \Pi_3\}.
\end{equation*}%
So in particular, there are no repeated cubes in $\mathscr{W}^{\gamma }$.
We have, using $|\widetilde E_{j,\gamma}^{k}|_{\omega }\approx
|3Q_{j}^{k}|_{\omega }$ and $\mathcal{L}_{j}^{k}(\mathbf{1}_{Q_{j}^{k}\cap
\Omega _{k+m+m_{0}}}f\sigma )\eqsim 2^{k}\mathbf{1}_{\mathcal{H}_{j,\mathrm{%
in}}^{k}}$ for $\left( k,j\right) \in \Pi _{3}$ again, it suffices to prove
that%
\begin{align}
& \sum_{Q_j^k\in \mathscr W^\gamma}2^{k}\sum_{i\in \mathbb{N}:\
Q_{i}^{k+m+m_{0}}\subset Q_{j}^{k}}A_{i}^{k+m+m_{0}}\int_{Q_{i}^{k+m+m_{0}}}%
\mathcal{L}_{j}^{k}\left( \mathbf{1}_{\widetilde E_{j,\gamma}^{k}}\omega
\right) \sigma  \label{last sum} \\
& \lesssim \sum_{Q_j^k\in \mathscr W^\gamma}\frac{|\widetilde
E_{j,\gamma}^{k}|_{\omega }}{|3Q_{j}^{k}|_{\omega }^{2}}\Big[ \sum_{i\in
\mathbb{N}:\ Q_{i}^{k+m+m_{0}}\subset Q_{j}^{k}}\hspace{-0.4cm}%
A_{i}^{k+m+m_{0}}\int_{Q_{i}^{k+m+m_{0}}}\mathcal{L}_{j}^{k}\left( \mathbf{1}%
_{\widetilde E_{j,\gamma}^{k}}\omega \right) \sigma \Big] ^{2}\lesssim (%
\mathfrak{T} _{M}\left( 3\right) \left( \sigma ,\omega \right) +\sqrt{
A_{2}\left( \sigma ,\omega \right) })^2\|f\|_{L^2(\sigma)}^2.  \notag
\end{align}%
For notational convenience, set
\begin{eqnarray*}
III^{\ast } &:= &\sum_{Q_j^k\in \mathscr W^\gamma}III^{\ast }\left(
Q_{j}^{k}\right) ; \\
III^{\ast }\left( Q_{j}^{k}\right) &:= &\frac{|\widetilde
E_{j,\gamma}^{k}|_{\omega }}{|3Q_{j}^{k}|_{\omega }^{2}}\Big[ \sum_{i\in
\mathbb{N}:\ Q_{i}^{k+m+m_{0}}\subset Q_{j}^{k}}\hspace{-0.4cm}%
A_{i}^{k+m+m_{0}}\int_{Q_{i}^{k+m+m_{0}}}\mathcal{L}_{j}^{k}\left( \mathbf{1}%
_{\widetilde E_{j,\gamma}^{k}}\omega \right) \sigma \Big] ^{2}.
\end{eqnarray*}

\subsection{Principal cube decomposition}

The subsection is to define principal cubes. Although we have reduced the
summation over good cubes, we still define the stopping collection which
admits bad cubes. To be precise, denote
\begin{equation*}
\mathcal{W}^{\gamma }:=\{Q_{j}^{k}\in \mathcal{W}_{k}^{\gamma }:k\equiv
k_{0}\,\mathrm{mod}\,\left( m+m_{0}\right) ,k\geq N_{0}\}.
\end{equation*}%
With the grid $\mathcal{W}=\mathcal{W}^{\gamma }$ in hand, we now introduce
principal cubes as in \cite[page 804]{MuWh} (note that we are suppressing
the dependence of $\mathcal{W}$ on $\gamma $ for reduction of notation).
Define $G_{0}$ to consist of the maximal cubes in $\mathcal{W}$. If $G_{n}$
has been defined, let $G_{n+1}$ consist \ of those indices $\left(
k,j\right) $ for which $Q_{j}^{k}\in \mathcal{W}$, there is an index $\left(
t,u\right) \in G_{n}$ with $k\geq t$ and $Q_{j}^{k}\subset Q_{u}^{t}$, and

(\textbf{i}) $A_{j}^{k}>\eta A_{u}^{t}\ $,

(\textbf{ii}) $A_{i}^{\ell }\leq \eta A_{u}^{t}$ whenever $%
Q_{j}^{k}\subsetneqq Q_{i}^{\ell }\subset Q_{u}^{t}$ .

Here $\eta $ is any constant larger than $1$, for example $\eta =4$ works
fine. Now define $\Gamma \equiv \bigcup\limits_{n=0}^{\infty }G_{n}$ and for
each index $\left( k,j\right) $ define $P\left( Q_{j}^{k}\right) $ to be the
smallest dyadic cube $Q_{u}^{t}$ containing $Q_{j}^{k}$ and with $\left(
t,u\right) \in \Gamma $. Then we have%
\begin{eqnarray}
&&\ \text{(\textbf{i}) }P\left( Q_{j}^{k}\right) =Q_{u}^{t}\Longrightarrow
A_{j}^{k}\leq \eta A_{u}^{t}\ ,  \label{main have} \\
&&\ \text{(\textbf{ii}) }Q_{j}^{k}\subsetneqq Q_{u}^{t}\text{ with }\left(
k,j\right) ,\left( t,u\right) \in \Gamma \Longrightarrow A_{j}^{k}>\eta
A_{u}^{t}\ .  \notag
\end{eqnarray}

Now we return to the estimate of $III^{\ast }$. Splitting the sum over $i$
inside $III^{\ast }\left( Q_{j}^{k}\right) $ according to whether $%
(k+m+m_{0},i)\in \Gamma $ or $P( Q_{i}^{k+m+m_{0}})=P\left( Q_{j}^{k}\right)
$:
\begin{align*}
III^{\ast } & \lesssim \sum_{Q_j^k\in \mathscr W^\gamma}\frac{|\widetilde
E_{j,\gamma}^{k}|_{\omega }}{|3Q_{j}^{k}|_{\omega }^{2}}\Big[ \sum_{i\in
\mathbb{N}:\ P\left( Q_{i}^{k+m+m_{0}}\right) =P\left( Q_{j}^{k}\right) }%
\hspace{-0.4cm}A_{i}^{k+m+m_{0}}\int_{Q_{i}^{k+m+m_{0}}}\mathcal{L}%
_{j}^{k}\left( \mathbf{1}_{\widetilde E_{j,\gamma}^{k}}\omega \right) \sigma %
\Big] ^{2} \\
& +\sum_{Q_j^k\in \mathscr W^\gamma}\frac{|\widetilde
E_{j,\gamma}^{k}|_{\omega }}{|3Q_{j}^{k}|_{\omega }^{2}}\Big[ \sum
_{\substack{ i\in \mathbb{N}:\ (k+m+m_{0},i)\in \Gamma  \\ %
Q_{i}^{k+m+m_{0}}\subset Q_{j}^{k}}}A_{i}^{k+m+m_{0}}\int_{Q_{i}^{k+m+m_{0}}}%
\mathcal{L}_{j}^{k}\left( \mathbf{1}_{\widetilde E_{j,\gamma}^{k}}\omega
\right) \sigma \Big] ^{2} \\
& =:IV+V.
\end{align*}%
It is relatively easy to estimate term $V$ by the Cauchy-Schwarz inequality
and \eqref{eq:a2ljk},
\begin{align}
V& =\sum_{Q_j^k\in \mathscr W^\gamma}\frac{|\widetilde
E_{j,\gamma}^{k}|_{\omega }}{|3Q_{j}^{k}|_{\omega }^{2}}\Big[\sum_{\substack{
i\in \mathbb{N}:\ (k+m+m_{0},i)\in \Gamma  \\ Q_{i}^{k+m+m_{0}}\subset
Q_{j}^{k}}}A_{i}^{k+m+m_{0}}\int_{Q_{i}^{k+m+m_{0}}}\mathcal{L}_{j}^{k}\big(%
\mathbf{1}_{\widetilde E_{j,\gamma}^{k}}\omega \big)\sigma \Big]^{2}
\label{V est} \\
& \leq \sum_{Q_j^k\in \mathscr W^\gamma}\frac{ \vert \widetilde
E_{j,\gamma}^{k}\vert _{\omega }}{\left\vert 3Q_{j}^{k}\right\vert _{\omega
}^{2}}\Big[\sum_{\substack{ i\in \mathbb{N}:\ (k+m+m_{0},i)\in \Gamma  \\ %
Q_{i}^{k+m+m_{0}}\subset Q_{j}^{k}}}|Q_{i}^{k+m+m_{0}}|_{\sigma }\big(%
A_{i}^{k+m+m_{0}}\big)^{2}\Big]  \notag \\
& \times \Big[\sum_{\substack{ i\in \mathbb{N}:\ (k+m+m_{0},i)\in \Gamma  \\ %
Q_{i}^{k+m+m_{0}}\subset Q_{j}^{k}}}\Big(\int_{Q_{i}^{k+m+m_{0}}}\mathcal{L}%
_{j}^{k}\big(\mathbf{1}_{\widetilde E_{j,\gamma}^{k}}\omega \big)d\sigma %
\Big)^{2}|Q_{i}^{k+m+m_{0}}|_{\sigma }^{-1}\Big]  \notag \\
& \leq \sum_{Q_j^k\in \mathscr W^\gamma}\frac{ \vert \widetilde
E_{j,\gamma}^{k} \vert _{\omega }}{\left\vert 3Q_{j}^{k}\right\vert _{\omega
}^{2}}\Big[\sum_{\substack{ i\in \mathbb{N}:\ (k+m+m_{0},i)\in \Gamma  \\ %
Q_{i}^{k+m+m_{0}}\subset Q_{j}^{k}}}|Q_{i}^{k+m+m_{0}}|_{\sigma }\big(%
A_{i}^{k+m+m_{0}}\big)^{2}\Big]\int_{Q_{j}^{k}}\big[\mathcal{L}_{j}^{k}\big(%
\mathbf{1}_{Q_{j}^{k}}\omega \big)\big]^{2}d\sigma  \notag \\
& \lesssim A_{2}\sum_{(t,u)\in \Gamma }(A_{u}^{t})^{2}|Q_{u}^{t}|_{\sigma
}\lesssim A_{2}\Vert f\Vert _{L^{2}(\sigma )}^{2}.  \notag
\end{align}

Thus we are left to estimate term $IV$. Fix $(t,u)$, and consider the sum%
\begin{eqnarray*}
IV^{t,u} &:= &\sum_{Q_{j}^{k}\in \mathscr{W}^\gamma: P(Q_j^k)=Q_u^t}\frac{%
|\widetilde E_{j,\gamma}^{k}|_{\omega }}{|3Q_{j}^{k}|_{\omega }^{2}}\Big[ %
\sum_{i\in \mathbb{N}:\ P(Q_{i}^{k+m+m_{0}})=P(Q_{j}^{k})}\hspace{-0.4cm}%
A_{i}^{k+m+m_{0}}\int_{Q_{i}^{k+m+m_{0}}}\mathcal{L}_{j}^{k}\left( \mathbf{1}%
_{\widetilde E_{j,\gamma}^{k}}\omega \right) \sigma \Big] ^{2} \\
&\lesssim & \left( A_{u}^{t}\right) ^{2}\sum_{Q_{j}^{k}\in \mathscr{W}%
^\gamma: P(Q_j^k)=Q_u^t}\frac{|\widetilde E_{j,\gamma}^{k}|_{\omega }}{%
|3Q_{j}^{k}|_{\omega }^{2}}\Big[ \int_{Q_{j}^{k}}\mathcal{L}_j^k( \mathbf{1}%
_{Q_j^k}\sigma ) \omega \Big] ^{2} \\
&=:&\left( A_{u}^{t}\right) ^{2}\mathcal{S}^{t,u},
\end{eqnarray*}%
where
\begin{equation*}
\mathcal{S}^{t,u}:=\sum_{Q_{j}^{k}\in \mathscr{W}^\gamma: P(Q_j^k)=Q_u^t}%
\frac{|\widetilde E_{j,\gamma}^{k}|_{\omega }}{|3Q_{j}^{k}|_{\omega }^{2}}%
\Big[ \int_{Q_{j}^{k}}\mathcal{L}_j^k( \mathbf{1}_{Q_j^k}\sigma ) \omega %
\Big] ^{2}.
\end{equation*}%
It is here in estimating $\mathcal{S}^{t,u}$, that the only quantitative use
of the triple testing condition occurs.

\begin{lemma}
\label{Ki}We claim that%
\begin{equation}
\mathcal{S}^{t,u}\leq C\left( \left( \mathfrak{T}_{M}\left( 3\right) \right)
^{2}+A_{2}\right) \left\vert Q_{u}^{t}\right\vert _{\sigma }\ .
\label{nontrip good bound}
\end{equation}
\end{lemma}

\begin{proof}
Let $\left\{ K_{i}\right\} _{i\in \mathcal{I}}$ be the collection of maximal
$\mathcal{D}^{\gamma }$-children $K_{i}$ satisfying $5K_{i}\subset Q_{u}^{t}$%
. If $\ell (Q_{j}^{k})<2^{-r}\ell (Q_{u}^{t})$, since $Q_{j}^{k}$ is good,
we have $\mathrm{dist}(Q_{j}^{k},\partial Q_{u}^{t})>4\ell (Q_{j}^{k})$.
Hence $5Q_{j}^{k}\subset Q_{u}^{t}$ and in particular $Q_{j}^{k}\subset
K_{i} $ for some $i\in \mathcal{I}$. For all cubes $K_{i}$ we have%
\begin{align*}
\sum_{Q_{j}^{k}\in \mathscr{W}^{\gamma }:Q_{j}^{k}\subset K_{i}}\frac{|%
\widetilde{E}_{j,\gamma }^{k}|_{\omega }}{|3Q_{j}^{k}|_{\omega }^{2}}\Big[%
\int_{Q_{j}^{k}}\mathcal{L}_{j}^{k}(\mathbf{1}_{Q_{j}^{k}}\sigma )\omega %
\Big]^{2}& \leq \sum_{Q_{j}^{k}\in \mathscr{W}^{\gamma },Q_{j}^{k}\subset
K_{i}}|\widetilde{E}_{j,\gamma }^{k}|_{\omega }\left[ \frac{1}{\left\vert
Q_{j}^{k}\right\vert _{\omega }}\int_{Q_{j}^{k}}\mathbf{1}_{K_{i}}M\left(
\mathbf{1}_{K_{i}}\sigma \right) d\omega \right] ^{2} \\
& \leq C\int \left[ M_{\omega }^{\mathcal{D}^{\gamma }}\big(\mathbf{1}%
_{K_{i}}M\left( \mathbf{1}_{K_{i}}\sigma \right) \big)\right] ^{2}d\omega \\
& \leq C\int_{K_{i}}M\left( \mathbf{1}_{K_{i}}\sigma \right) ^{2}d\omega
\lesssim \left( \mathfrak{T}_{M}\left( 3\right) \right) ^{2}|3K_{i}|_{\sigma
}\ .
\end{align*}%
Thus we have%
\begin{equation*}
\sum_{i\in \mathcal{I}}\sum_{Q_{j}^{k}\in \mathscr{W}^{\gamma
}:Q_{j}^{k}\subset K_{i}}\frac{|\widetilde{E}_{j,\gamma }^{k}|_{\omega }}{%
|3Q_{j}^{k}|_{\omega }^{2}}\Big[\int_{Q_{j}^{k}}\mathcal{L}_{j}^{k}(\mathbf{1%
}_{Q_{j}^{k}}\sigma )\omega \Big]^{2}\leq \sum_{i\in \mathcal{I}}\left(
\mathfrak{T}_{M}\left( 3\right) \right) ^{2}\left\vert 3K_{i}\right\vert
_{\sigma }\leq C_{\mathrm{bound}}\left( \mathfrak{T}_{M}\left( 3\right)
\right) ^{2}\left\vert Q_{u}^{t}\right\vert _{\sigma }\ ,
\end{equation*}%
where $C_{\mathrm{bound}}$ is a constant such that $\sum_{i\in \mathcal{I}}%
\mathbf{1}_{3K_{i}}\leq C_{\mathrm{bound}}\mathbf{1}_{Q_{u}^{t}}$ (due to a
similar argument as (iii) of Proposition \ref{prop:whitney}). We also have%
\begin{equation*}
\sum_{\substack{ Q_{j}^{k}\in \mathscr{W}^{\gamma }:\ Q_{j}^{k}\subset
Q_{u}^{t}  \\ \ell \left( Q_{j}^{k}\right) \geq 2^{-r}\ell \left(
Q_{u}^{t}\right) }}\frac{|\widetilde{E}_{j,\gamma }^{k}|_{\omega }}{%
|3Q_{j}^{k}|_{\omega }^{2}}\Big[\int_{Q_{j}^{k}}\mathcal{L}_{j}^{k}(\mathbf{1%
}_{Q_{j}^{k}}\sigma )\omega \Big]^{2}\leq C\sum_{\substack{ Q_{j}^{k}\in %
\mathscr{W}^{\gamma }:\ Q_{j}^{k}\subset Q_{u}^{t}  \\ \ell \left(
Q_{j}^{k}\right) \geq 2^{-r}\ell \left( Q_{u}^{t}\right) }}%
A_{2}|Q_{j}^{k}|_{\sigma }\leq CrA_{2}\left\vert Q_{u}^{t}\right\vert
_{\sigma }\ .
\end{equation*}%
Combining the above estimates we conclude the proof.
\end{proof}

Then summing over $\left( t,u\right) \in \Gamma $ we obtain%
\begin{eqnarray*}
IV &\lesssim &\left( \left( \mathfrak{T}_{M}\left( 3\right) \right)
^{2}+A_{2}\right) \sum_{\left( t,u\right) \in \Gamma }|Q_{u}^{t}|_{\sigma
}\left( A_{u}^{t}\right) ^{2} \lesssim \left( \left( \mathfrak{T}_{M}\left(
3\right) \right) ^{2}+A_{2}\right) \left\Vert f\right\Vert _{L^{2}\left(
\sigma \right) }^{2} ,  \notag
\end{eqnarray*}%
which combined with (\ref{V est}) gives%
\begin{equation}
\sum_{(k,j)\in \Pi _{3}}2^{2k}\left\vert E_{j}^{k}\right\vert _{\omega }\leq
\left( \left( \mathfrak{T}_{M}\left( 3\right) \right) ^{2}+A_{2}\right)
\left\Vert f\right\Vert _{L^{2}\left( \sigma \right) }^{2} .
\label{case 3 est}
\end{equation}

\subsubsection{Wrapup of the proof}

Now letting the integer $N_0\rightarrow -\infty $ in the construction of
principal cubes, and summing over $0\leq k_{0}\leq m+m_{0}-1$ in our
convention regarding distinguished index pairs, we obtain from (\ref{lim inf
average}) that
\begin{eqnarray}
&&\int_{\mathbb{R}^{n}}\left[ M\left( f\sigma \right) \left( x\right) \right]
^{2}d\omega \left( x\right) \lesssim \mathbb{E}_{\gamma }\int_{\mathbb{R}%
^{n}}\left[ M^{\mathcal{D}^\gamma}\left( f\sigma \right) \left( x\right) %
\right] ^{2}d\omega \left( x\right)  \label{wrap} \\
&\lesssim &\mathbb{E}_{\gamma }\Big( \sum_{\text{all }(k,j)}2^{2k}|E_{j,%
\gamma}^k|_\omega\Big) +3^{n}C_{n}2^{-2m_{0}}\int \left[ M\left( f\sigma
\right) \right] ^{2}d\omega  \notag \\
&\le & \pi_{\mathrm{good}}^{-1}\mathbb{E}_{\gamma } \Big( \sum_{(k,j)\in \Pi
_{1}}2^{2k}|\widetilde E_{j,\gamma}^k|_\omega+\sum_{(k,j)\in \Pi
_{2}}2^{2k}|\widetilde E_{j,\gamma}^k|_\omega+\sum_{(k,j)\in \Pi
_{3}}2^{2k}|\widetilde E_{j,\gamma}^k|_\omega\Big) +3^{n}C_{n}2^{-2m_{0}}%
\int \left[ M\left( f\sigma \right) \right] ^{2}d\omega ,  \notag
\end{eqnarray}%
which by the estimates (\ref{case 1 est}), (\ref{case 2 est}) and (\ref{case
3 est}) gives%
\begin{eqnarray}
&&\int_{\mathbb{R}^{n}}\left[ M\left( f\sigma \right) \left( x\right) \right]
^{2}d\omega \left( x\right)  \label{wrap'} \\
&\lesssim &\left( \beta +2^{-2m_{0}}\right) \int M\left( f\sigma \right)
^{2}d\omega +\beta ^{-1}C_{m+m_{0}}^{2}A_{2}\Vert f\Vert _{L^{2}(\sigma
)}^{2}+\left( \left( \mathfrak{T}_{M}\left( 3\right) \right)
^{2}+A_{2}\right) \Vert f\Vert _{L^{2}(\sigma )}^{2}  \notag
\end{eqnarray}%
Now we can absorb the first term on the right hand side by choosing $\beta
>0 $ sufficiently small and $m_{0}$ sufficiently large since the integral $%
\int M\left( f\sigma \right) ^{2}d\omega $ is finite. Then we take the
supremum over $f\in L^{2}\left( \sigma \right) $ with $\left\Vert
f\right\Vert _{L^{2}(\sigma )}=1$ to obtain
\begin{equation*}
\mathfrak{N}_{M}\leq C\left( \mathfrak{T}_{M}\left( 3\right) +\sqrt{A_{2}}%
\right) \ .
\end{equation*}%
As the opposite inequality is trivial, this completes the proof of Theorem %
\ref{maximal}.

\section{Weak triple testing: proof of Theorem \protect\ref{weak}\label{Sec
weak}}

This section is devote to prove Theorem \ref{weak}. We begin with a basic
observation, that is, the places where we need $\mathfrak{T}_M(3)<\infty$
are the following

\begin{enumerate}
\item qualitatively, at the beginning of the argument, in order to assume
without loss of generality that $\int M\left( f\sigma \right) ^{2}d\omega
<\infty $,

\item and quantitatively, near the end of the argument, in the proof of
Lemma \ref{Ki}.
\end{enumerate}

The qualitative use of the triple testing condition is easily handled using $%
D$-triple testing as follows. As before, we can replace $\omega $ by $\omega
_{N}=\omega \mathbf{1}_{B(0,N)}$ with $N>R$, and $f$ is supported on $%
Q\left( 0,R\right) $. Moreover, the \emph{testing condition} for the cube $%
Q_{m}=Q(0,3^{m}N)$ must hold for some $m\geq 0$, since otherwise iteration
of the inequality $\left\vert Q_{m}\right\vert _{\sigma }\leq \frac{1}{D}%
\left\vert Q_{m+1}\right\vert _{\sigma }$ eventually violates the $A_{2}$
condition,
\begin{equation*}
A_{2}\left( \sigma ,\omega \right) \geq \frac{\left\vert Q_{m}\right\vert
_{\sigma }\left\vert Q_{m}\right\vert _{\omega _{N}}}{\left\vert
Q_{m}\right\vert ^{2}}\geq \frac{D^{m}\left\vert Q_{0}\right\vert _{\sigma
}\left\vert Q_{0}\right\vert _{\omega }}{3^{2mn}\left\vert Q_{0}\right\vert
^{2}}=\left( \frac{D}{3^{2n}}\right) ^{m}\frac{\left\vert Q_{0}\right\vert
_{\sigma }\left\vert Q_{0}\right\vert _{\omega }}{\left\vert
Q_{0}\right\vert ^{2}},
\end{equation*}%
if $D$ is chosen greater than $3^{2n+1}$. Thus if the \emph{testing condition%
} holds for the cube $Q_{m}$ we have
\begin{equation*}
\int M\left( f\sigma \right) ^{2}d\omega _{N}\leq \Vert f\Vert _{L^{\infty
}}\int_{Q(0,N)}M(\mathbf{1}_{Q_{m}}\sigma )^{2}d\omega <\infty ,
\end{equation*}%
and therefore, without loss of generality, we can assume $\int M(f\sigma
)^{2}d\omega <\infty $. The second point is delicate. Notice that, if we {%
\textit{a priori}} have the usual testing, i.e., $\mathfrak{T}_{M}<\infty $,
then we trivially have
\begin{equation*}
\mathfrak{T}_{M}(3)\leq \mathfrak{T}_{M}^{D}(3)+D^{-1}\mathfrak{T}_{M}.
\end{equation*}%
Then let $D$ be sufficiently large and use the trivial fact that $\mathfrak{T%
}_{M}\leq \mathfrak{N}_{M}$ we get
\begin{equation*}
\mathfrak{N}_{M}\leq C\left( \mathfrak{T}_{M}^{D}\left( 3\right) +\sqrt{A_{2}%
}\right) .
\end{equation*}%
Therefore, the main goal is to remove the a priori assumption. To this end,
consider the truncated maximal function
\begin{equation*}
M^{t}f(x):=\sup_{Q\ni x,\ell (Q)\geq t}\frac{1}{|Q|}\int_{Q}|f|.
\end{equation*}%
Obviously, $M^{t}\leq M$ and therefore, the related testing condition for $%
M^{t}$ also holds. By the monotone convergence theorem, it suffices to prove
\begin{equation*}
\Vert M^{t}(f\sigma )\Vert _{L^{2}(\omega )}\leq C\Vert f\Vert
_{L^{2}(\sigma )}.
\end{equation*}

The argument for $M^{t}$ is basically identical with that for $M$ above.
Indeed, with the same proof, we have the following analogy of Lemma  \ref{domination}:
\[
M^tf\left( x\right) \leq 2^{4n+1}\mathbb{E}_\gamma M^{\mathcal{D}^\gamma,t}f\left(
x\right),\ \ M^{\mathcal{D}^\gamma,t}f(x):=\sup_{x\in Q\in \mathcal D^\gamma,\ell (Q)\geq t}\frac{1}{|Q|}\int_{Q}|f|.
\]
Then everything is same with $M^t$ and $M^{\mathcal{D}^\gamma,t}$ in place of $M$ and $M^{\mathcal{D}^\gamma}$, respectively.
We only remark that when $\ell (Q_j^k)<t$ we do not have the analogy of the maximal principle \eqref{max princ}, instead, the same arguments will imply that $E_j^{k,\gamma}$ (defined according to $M^t$ and $M^{\mathcal{D}^\gamma,t}$) is empty, which does not affect the proof.

Hence, for fixed $t>0$, it remains to check that the \emph{a priori}
assumption $\mathfrak{T}_{M^{t}}(\sigma ,\omega _{N})<\infty $ holds
automatically. Indeed, fix a cube $Q$ and denote $K:=Q(0,N)$, we have
\begin{align*}
\int_{Q}M^{t}(1_{Q}\sigma )^{2}d\omega _{N}& \leq 2\int_{Q}M^{t}(1_{Q\cap
3K}\sigma )^{2}d\omega _{N}+2\int_{Q}M^{t}(1_{Q\setminus 3K}\sigma
)^{2}d\omega _{N} \\
& \leq 2\int_{K}M^{t}(1_{Q\cap 3K}\sigma )^{2}d\omega +2\int_{Q\cap
K}M(1_{Q\setminus 3K}\sigma )^{2}d\omega .
\end{align*}%
To continue, notice that trivially
\begin{equation*}
M^{t}(1_{Q\cap 3K}\sigma )\leq t^{-n}\sigma (Q\cap 3K)\ \ \text{and}\ \
M(1_{Q\setminus 3K}\sigma )1_{K}\eqsim \text{constant}.
\end{equation*}%
Therefore,
\begin{align*}
\int_{Q}M^{t}(1_{Q}\sigma )^{2}d\omega _{N}& \lesssim
t^{-2n}N^{2n}A_{2}\sigma (Q)+w(K)  M (1_{Q\setminus 3K}\sigma )(c_K)^{2}
\\
& \lesssim (t^{-2n}N^{2n}+1)A_{2}\sigma (Q),
\end{align*}%
which affirms the claim and therefore the proof of Theorem \ref{weak} is
complete.

\section{Proof of Theorem \protect\ref{weakfrac}}

\label{sec: frac} This section is devoted to prove Theorem \ref{weakfrac}.
We start with the weak type inequalities for fractional integrals. Let $%
\mathfrak{N}_{I_{\alpha }}^{\mathrm{weak}}\left( \sigma ,\omega \right) $
denote the best constant in the weak type $\left( 2,2\right) $ inequality
for the fractional integral $I_{\alpha }$:%
\begin{equation*}
\sup_{\lambda >0}\lambda ^{2}\left\vert \left\{ I_{\alpha }(f\sigma)
>\lambda \right\} \right\vert _{\omega }\leq \mathfrak{N}_{I_{\alpha }}^{%
\mathrm{weak }}\left( \sigma, \omega \right) ^{2}\int \left\vert
f\right\vert ^{2}d\sigma ,\ \ \ \ \ f\in L^{2}\left( \sigma \right) .
\end{equation*}%
It is known from \cite{Saw1} that the weak type norm is equivalent to the
dual testing condition, $\mathfrak{N}_{I_{\alpha }}^{\mathrm{weak}}\left(
\sigma ,\omega \right) \approx \mathfrak{T}_{I_{\alpha }}\left( \omega
,\sigma \right) $, and also that $A_{2}^{\alpha }\left( \sigma ,\omega
\right) \leq \mathfrak{T}_{I_{\alpha }}\left( \omega ,\sigma \right) $ where
the $\alpha $-fractional Muckenhoupt condition is given by%
\begin{equation*}
A_{2}^{\alpha }\left( \sigma ,\omega \right) \equiv \sup_{Q\in \mathcal{P}%
^{n}}\frac{\left\vert Q\right\vert _{\sigma }}{\left\vert Q\right\vert ^{1-%
\frac{\alpha }{n}}}\frac{\left\vert Q\right\vert _{\omega }}{\left\vert
Q\right\vert ^{1-\frac{\alpha }{n}}}.
\end{equation*}%
We now show that the weight pair $\left( \sigma ,\omega \right) $ in Example %
\ref{no control} above also satisfies $\mathfrak{T}_{I_{\alpha }}(3)\left(
\sigma ,\omega \right) <\infty $ and $A_{2}^{\alpha }\left( \sigma ,\omega
\right) =\infty $ for $0<\alpha <1$.

\begin{example}
Define%
\begin{eqnarray*}
d\sigma \left( y\right) &=&e^{y}dy, \\
d\omega \left( x\right) &=&\mathbf{1}_{\left[ 0,1\right] }\left( x\right) dx.
\end{eqnarray*}%
Then%
\begin{equation*}
\mathfrak{N}_{I_{\alpha }}^{\mathrm{weak}}\left( \omega, \sigma \right) \geq
A_{2}^{\alpha }\left( \sigma ,\omega \right) \geq \sup_{R>1}\frac{\left\vert %
\left[ 0,R\right] \right\vert _{\omega }}{\left\vert R\right\vert ^{1-\frac{%
\alpha }{n}}}\frac{\left\vert \left[ 0,R\right] \right\vert _{\sigma }}{%
\left\vert R\right\vert ^{1-\frac{\alpha }{n}}}=\sup_{R>1}\frac{1}{R^{1-%
\frac{\alpha }{n}}}\frac{e^{R}-1}{R^{1-\frac{\alpha }{n}}}=\infty ,
\end{equation*}%
and%
\begin{equation*}
\mathfrak{T}_{I_{\alpha }}(3)\left( \sigma ,\omega \right) \lesssim 1.
\end{equation*}%
Indeed, without loss of generality, $I=\left[ a,b\right] $ with $I\cap \left[
0,1\right] \neq \emptyset $ (since otherwise $\mathbf{1}_{I}\omega =0$ and $%
\int_{I}\left\vert I_{\alpha }\left( \mathbf{1}_{I}\sigma \right)
\right\vert ^{2}d\omega =0$) and so%
\begin{equation*}
a<1\text{ and }b>0.
\end{equation*}%
Now we assume this and compute $\frac{1}{\left\vert 3I\right\vert _{\sigma }}%
\int_{I}\left\vert I_{\alpha }\left( \mathbf{1}_{I}\sigma \right)
\right\vert ^{2}d\omega $ in two cases.

\begin{enumerate}
\item Case $b>2$: In this case we have
\begin{equation*}
I_{\alpha }\left( \mathbf{1}_{I}\sigma \right) \left( x\right)
=\int_{a}^{b}\left\vert x-y\right\vert ^{\alpha -1}e^{y}dy\leq
e^x\int_{-\infty }^{b}\left\vert y\right\vert ^{\alpha -1}e^{y}dy\lesssim
b^{\alpha-1 }e^{b}
\end{equation*}
for $0\leq x\leq 1$, and so
\begin{eqnarray*}
&&\int_{I}\left\vert I_{\alpha }\left( \mathbf{1}_{I}\sigma \right)
\right\vert ^{2}d\omega \lesssim \int_{0}^{1}\left( b^{\alpha
-1}e^{b}\right) ^{2}dx\approx \frac{e^{2b}}{b^{2\left( 1-\alpha\right) }}, \\
&&\left\vert 3I\right\vert _{\sigma }=\int_{2a-b}^{2b-a}d\sigma \geq
\int_{2b-a-1}^{2b-a}e^{y}dy\approx e^{2b-a}, \\
&\Longrightarrow &\frac{1}{\left\vert 3I\right\vert _{\sigma }}%
\int_{I}\left\vert I_{\alpha }\left( \mathbf{1}_{I}\sigma \right)
\right\vert ^{2}d\omega \lesssim \frac{\frac{e^{2b}}{b^{2\left( 1-\alpha
\right) }}}{e^{2b-a}}=\frac{e^{a}}{b^{2\left( 1-\alpha \right) }}\lesssim 1.
\end{eqnarray*}

\item Case $b\leq 2$: In this case we have $I_{\alpha }\left( \mathbf{1}%
_{I}\sigma \right) \left( x\right) =\int_{a}^{b}\left\vert x-y\right\vert
^{\alpha -1}e^{y}dy\lesssim e^{2}$ for $0\leq x\leq 1$, and so we consider
two subcases.

\begin{enumerate}
\item Subcase $a\geq -1$:%
\begin{eqnarray*}
&&\int_{I}\left\vert I_{\alpha }\left( \mathbf{1}_{I}\sigma \right)
\right\vert ^{2}d\omega \lesssim e^{2}\left\vert I\cap \left[ 0,1\right]
\right\vert \\
&&\left\vert 3I\right\vert _{\sigma }=\int_{2a-b}^{2b-a}e^{y}dy\geq
e^{2a-b}3\left( b-a\right) \geq 3e^{-4}\left( b-a\right) , \\
&\Longrightarrow &\frac{1}{\left\vert 3I\right\vert _{\sigma }}%
\int_{I}\left\vert I_{\alpha }\left( \mathbf{1}_{I}\sigma \right)
\right\vert ^{2}d\omega \leq \frac{e^{2}\left\vert I\cap \left[ 0,1\right]
\right\vert }{3e^{-4}\left( b-a\right) }\leq \frac{e^{6}}{3}.
\end{eqnarray*}

\item Subcase $a<-1$: In this subcase we also have $b-a>0-\left( -1\right)
=1 $ and so%
\begin{eqnarray*}
&&\int_{I}\left\vert I_{\alpha }\left( \mathbf{1}_{I}\sigma \right)
\right\vert ^{2}d\omega \lesssim e^{2}\left\vert I\cap \left[ 0,1\right]
\right\vert \\
&&\left\vert 3I\right\vert _{\sigma }=\int_{2a-b}^{2b-a}e^{y}dy\geq
\int_{b-1}^{b}e^{y}dy=e^{b}-e^{b-1}\geq 1-e^{-1}, \\
&\Longrightarrow &\frac{1}{\left\vert 3I\right\vert _{\sigma }}%
\int_{I}\left\vert I_{\alpha }\left( \mathbf{1}_{I}\sigma \right)
\right\vert ^{2}d\omega \lesssim \frac{e^{2}\left\vert I\cap \left[ 0,1%
\right] \right\vert }{1-e^{-1}}\leq \frac{e^{2}}{1-e^{-1}}.
\end{eqnarray*}
\end{enumerate}
\end{enumerate}
\end{example}

This example shows that we cannot remove the Muckenhoupt constant $%
A_{2}^{\alpha }\left( \sigma ,\omega \right) $ from the following theorem.

\begin{theorem}
For $D>1$ sufficiently large we have%
\begin{equation*}
\mathfrak{N}_{I_{\alpha }}^{\mathrm{weak}}\left( \sigma ,\omega \right)
\approx \mathfrak{T}_{I_{\alpha }}^{D}(3)\left( \omega, \sigma \right)
+A_{2}^{\alpha }\left( \sigma ,\omega \right) ,
\end{equation*}%
for all pairs $\left( \sigma ,\omega \right) $ of locally finite positive
Borel measures on $\mathbb{R}^{n}$.
\end{theorem}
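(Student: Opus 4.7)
The inequality $\mathfrak{N}_{I_\alpha}^{\mathrm{weak}}(\sigma,\omega)\gtrsim\mathfrak{T}_{I_\alpha}^{D}(3)(\omega,\sigma)+A_2^\alpha(\sigma,\omega)$ is immediate: $\mathfrak{T}_{I_\alpha}^{D}(3)(\omega,\sigma)\le\mathfrak{T}_{I_\alpha}(\omega,\sigma)\approx\mathfrak{N}_{I_\alpha}^{\mathrm{weak}}(\sigma,\omega)$ by \cite{Saw1}, and $A_2^\alpha(\sigma,\omega)\le\mathfrak{T}_{I_\alpha}(\omega,\sigma)$ is also recorded in \cite{Saw1}. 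For the forward direction, the same characterization reduces matters to showing the cube-by-cube estimate
$$\int_Q|I_\alpha(\mathbf{1}_Q\omega)|^2\,d\sigma\lesssim\Bigl[(\mathfrak{T}_{I_\alpha}^{D}(3)(\omega,\sigma))^2+(A_2^\alpha(\sigma,\omega))^2\Bigr]|Q|_\omega$$
for every cube $Q$, which is just the testing hypothesis when $Q$ is $D$-$3$-doubling for $\omega$. So I focus on a $Q$ that is not $D$-$3$-doubling for $\omega$.

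The plan is a corona decomposition inside $Q$ along the lines of the parental-doubling stopping in the proof of Theorem~\ref{maximal}. In a dyadic grid containing $Q$, let $\mathcal{S}(Q)$ be the collection of maximal dyadic sub-cubes $R\subset Q$ for which both $R$ and $3R$ are $D$-$3$-doubling for $\omega$. A counting argument patterned on the bound $\sum_{Q\in\mathfrak{C}_{\mathcal{Q}_{t,u}^{1}}(Q_{j}^{k})}|Q|_\sigma\le(C_W/D)|Q_{j}^{k}|_\sigma$ at the end of the proof of Theorem~\ref{maximal} shows that, for $D$ sufficiently large in terms of the dimension, $\mathcal{S}(Q)$ covers $Q$ modulo an $\omega$-null set. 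Then split
$$\int_Q|I_\alpha(\mathbf{1}_Q\omega)|^2\,d\sigma\le 2\sum_{R\in\mathcal{S}(Q)}\int_R|I_\alpha(\mathbf{1}_{3R}\omega)|^2\,d\sigma+2\sum_{R\in\mathcal{S}(Q)}\int_R|I_\alpha(\mathbf{1}_{Q\setminus 3R}\omega)|^2\,d\sigma.$$

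The local pieces are handled by the $D$-$3$-testing applied to the doubling cube $3R$:
$$\int_R|I_\alpha(\mathbf{1}_{3R}\omega)|^2\,d\sigma\le\int_{3R}|I_\alpha(\mathbf{1}_{3R}\omega)|^2\,d\sigma\le(\mathfrak{T}_{I_\alpha}^{D}(3))^2|3R|_\omega\lesssim D(\mathfrak{T}_{I_\alpha}^{D}(3))^2|R|_\omega,$$
summing over the disjoint $R$'s to give $\lesssim D(\mathfrak{T}_{I_\alpha}^{D}(3))^2|Q|_\omega$. The far pieces are controlled via a standard Poisson-tail argument for fractional integrals: decomposing $Q\setminus 3R\subset\bigcup_{k\ge 1}(2^{k+1}R\setminus 2^kR)$ and using $|x-y|\gtrsim 2^k\ell(R)$ for $x\in R$ and $y$ in the $k$-th shell, a shell-wise Cauchy-Schwarz paired with the fractional Muckenhoupt estimate $|2^{k+1}R|_\omega|2^{k+1}R|_\sigma\lesssim(A_2^\alpha)^2\ell(2^{k+1}R)^{2(n-\alpha)}$ and the summability of $\sum_k (2^k\ell(R))^{\alpha-n}$ (owing to $\alpha<n$) yields, upon summing over $R\in\mathcal{S}(Q)$ and rearranging dyadically, the bound $(A_2^\alpha)^2|Q|_\omega$.

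The principal obstacle will be the corona construction itself. Since applying the $D$-$3$-testing to $3R$ requires both $R$ and $3R$ to be $D$-$3$-doubling, the stopping family $\mathcal{S}(Q)$ uses a two-scale doubling condition, and I must show that its maximal such sub-cubes exhaust $Q$ modulo an $\omega$-null set. This reduces exactly to the $C_W/D$ bookkeeping at the close of the proof of Theorem~\ref{maximal}: failures of two-scale doubling are packed sparsely enough in any dyadic generation that their descendants' $\sigma$-mass contracts by a definite factor, provided $D$ is at least of order $2C_W$. A secondary technical point is the Poisson-tail estimate for $I_\alpha(\mathbf{1}_{Q\setminus 3R}\omega)$, which must be summed over $R\in\mathcal{S}(Q)$ via a Carleson-type rearrangement (grouping by the dyadic ancestor at scale $2^{k+1}\ell(R)$) to avoid a divergent per-cube bound and recover the clean $(A_2^\alpha)^2|Q|_\omega$ total.
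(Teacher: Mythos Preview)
Your route differs from the paper's: rather than a good-$\lambda$ inequality for the weak-type norm, you attempt to upgrade restricted testing plus $A_2^\alpha$ to the full dual testing on every cube $Q$, and then invoke \cite{Saw1}. The main gap is a measure mismatch in the corona construction. You select stopping cubes $R\in\mathcal{S}(Q)$ by an $\omega$-doubling criterion (this is what is required to apply $\mathfrak{T}_{I_\alpha}^D(3)(\omega,\sigma)$ to $3R$), but the decomposition $\int_Q=\sum_R\int_R$ is with respect to $d\sigma$, so you need the $R$'s to exhaust $Q$ modulo a $\sigma$-null set, not an $\omega$-null one. Your text oscillates between the two (``modulo an $\omega$-null set'' in one sentence, ``their descendants' $\sigma$-mass contracts'' in the next), but the $C_W/D$ bookkeeping of Theorem~\ref{maximal} gives neither: there the parental doubling is in $\sigma$, so failure $|P|_\sigma>D|Q|_\sigma$ directly yields $\sigma$-contraction of descendants; here failure $|3R|_\omega>D|R|_\omega$ says nothing whatsoever about $\sigma$, and even for $\omega$ the iteration stalls because the triple $3R'$ of a child $R'$ of $R$ is only contained in $3R$, whose $\omega$-mass is by hypothesis \emph{large} relative to $|R|_\omega$.

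The paper's good-$\lambda$ argument avoids this entirely by working at the level of the Whitney decomposition of $\Omega_\lambda=\{I_\alpha(f\sigma)>\lambda\}$: non-$\omega$-doubling Whitney cubes $Q_k$ are simply absorbed, since $\sum_{k\in E}\lambda^2|Q_k|_\omega\le\frac{1}{D}\sum_k\lambda^2|9Q_k|_\omega\le\frac{C_W}{D}\lambda^2|\Omega_\lambda|_\omega$, a small fraction of the weak-type quantity itself once $D$ is large. This absorption trick relies on estimating $\lambda^2|\Omega_\lambda|_\omega$ against itself and has no analogue in a cube-by-cube testing estimate. Your Poisson-tail term is a secondary issue but also not settled by the sketch: the Carleson rearrangement must reconcile the $\sigma$-disjointness $\sum_{R\subset S}|R|_\sigma\le|S|_\sigma$ with a target of the form $|Q|_\omega$, and the stated bound $(A_2^\alpha)^2|Q|_\omega$ (which should read $A_2^\alpha|Q|_\omega$ from the definition of $A_2^\alpha$) does not emerge without further structure on $\mathcal{S}(Q)$ that you have not established.
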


\begin{proof}
We modify the proof of the weak type characterization in \cite{Saw1}. For $f$
bounded nonnegative and having compact support, define
\begin{equation*}
\Omega _{\lambda }\equiv \left\{ I_{\alpha }(f\sigma) >\lambda \right\} = {%
\bigcup_{j}} Q_{k}
\end{equation*}
as in the standard Whitney decomposition with $N=9$. Then we have the well
known maximum principle,
\begin{equation*}
I_{\alpha }(f \mathbf{1}_{\left( 3Q_{k}\right) ^{c}}\sigma) \left( x\right)
\leq \gamma \lambda ,\ \ \ \ \ \text{for }x\in Q_{k}\cap \{
M_\alpha(f\sigma)\le \varepsilon(\gamma) \lambda\}\ .
\end{equation*}%
Denote by $E$ the set of indices $k$ such that%
\begin{equation}
\left\vert 9Q_{k}\right\vert _{\omega }>D\left\vert Q_{k}\right\vert
_{\omega }\ ,  \label{doubling}
\end{equation}%
by $F$ the set of indices $k$ such that (\ref{doubling}) fails \ and%
\begin{equation}
\frac{1}{\left\vert Q_{k}\right\vert _{\omega }}\int_{Q_{k}}I_{\alpha
}\left( \mathbf{1}_{3Q_{k}}fd\sigma \right) d\omega >\beta \lambda ,
\label{threshhold}
\end{equation}%
and by $G$ the set of indices $k$ such that (\ref{doubling}) and (\ref%
{threshhold}) fails. Then for $k$ in $F$ we have%
\begin{align*}
\lambda ^{2}\left\vert Q_{k}\right\vert _{\omega } &< \beta ^{-2}\left\vert
Q_{k}\right\vert _{\omega }^{-1}\left( \int_{Q_{k}}I_{\alpha }\left( \mathbf{%
1}_{3Q_{k}}fd\sigma \right) d\omega \right) ^{2} \\
&= \beta ^{-2}\left\vert Q_{k}\right\vert _{\omega }^{-1}\left(
\int_{3Q_{k}}I_{\alpha }\left( \mathbf{1}_{Q_{k}}d\omega \right) \ fd\sigma
\right) ^{2} \\
&\leq \beta ^{-2}\left\vert Q_{k}\right\vert _{\omega }^{-1}\left(
\int_{3Q_{k}}I_{\alpha }\left( \mathbf{1}_{Q_{k}}d\omega \right) ^{2}d\sigma
\right) \left( \int_{Q_{k}}f^{2}d\sigma \right) \\
&\leq \beta ^{-2}\left( \mathfrak{T}_{I_{\alpha }}^{D}(3)\right) ^{2}D\left(
\int_{Q_{k}}f^{2}d\sigma \right) ,
\end{align*}%
where we have used $D$-restricted testing $\int_{3Q_{k}}I_{\alpha }\left(
\mathbf{1}_{3Q_{k}}d\omega \right) ^{2}d\sigma \leq (\mathfrak{T}_{I_{\alpha
}}^{D}(3))^2D\left\vert Q_{k}\right\vert _{\omega }$ since (\ref{doubling})
fails for $k\in F$. On the other hand, for $k\in G$, we have by the maximum
principle that%
\begin{align*}
&\left\vert Q_{k}\cap \left\{ I_{\alpha }(f\sigma) >\left( \gamma +1\right)
\lambda \right\} \right\vert _{\omega } \\
&\leq \left\vert Q_{k}\cap \left\{ I_{\alpha }\left( \mathbf{1}%
_{3Q_{k}}f\sigma \right) >\lambda \right\} \right\vert _{\omega }+
\left\vert Q_{k}\cap \left\{ M_\alpha\left( f\sigma \right)
>\varepsilon(\gamma)\lambda \right\} \right\vert _{\omega } \\
&\leq \beta \left\vert Q_{k}\right\vert _{\omega }+ \left\vert Q_{k}\cap
\left\{ M_\alpha\left( f\sigma \right) >\varepsilon(\gamma)\lambda \right\}
\right\vert _{\omega }\ ,
\end{align*}%
since (\ref{doubling}) and (\ref{threshhold}) fails. Altogether this gives
the `good $\lambda $-inequality'%
\begin{align*}
&\left( 3\lambda \right) ^{2}|\big\{ I_{\alpha }(f\sigma) >3\lambda \big\} %
|_{\omega } \\
&=\sum_{k}\left( 3\lambda \right) ^{2}\left\vert Q_{k}\cap \left\{ I_{\alpha
}(f\sigma) >3\lambda \right\} \right\vert _{\omega } \\
&\le 9\sum_{k\in E}\lambda ^{2}\left\vert Q_{k}\right\vert _{\omega
}+9\sum_{k\in F}\lambda ^{2}\left\vert Q_{k}\right\vert _{\omega }+9\lambda
^{2}\sum_{k\in G}\left\vert Q_{k}\cap \left\{ I_{\alpha }(f\sigma) >3\lambda
\right\} \right\vert _{\omega } \\
&\leq \frac{9}{D}\sum_{k\in E}\lambda ^{2}\left\vert 9Q_{k}\right\vert
_{\omega }+\left( \frac{3}{\beta }\mathfrak{T}_{I_{\alpha }}^{D}(3)\right)
^{2}D\sum_{k\in F}\left( \int_{Q_{k}}f^{2}d\sigma \right) +9\lambda
^{2}\beta \sum_{k\in G}\left\vert Q_{k}\right\vert _{\omega } \\
&\qquad+ (3\lambda)^2 \left\vert \left\{ M_\alpha\left( f\sigma \right)
>\varepsilon \lambda \right\} \right\vert _{\omega } \\
&\leq \frac{9C_W}{D}\lambda ^{2}\left\vert \left\{ I_{\alpha }(f\sigma)
>\lambda \right\} \right\vert _{\omega }+\left( \frac{3}{\beta }\mathfrak{T}%
_{I_{\alpha }}^{D}(3)\right) ^{2}D\left( \int f^{2}d\sigma \right) +9\lambda
^{2}\beta \left\vert \left\{ I_{\alpha }(f\sigma) >\lambda \right\}
\right\vert _{\omega } \\
&\qquad+ C(\varepsilon) A_2^\alpha(\sigma, \omega) \int f^2 d\sigma,
\end{align*}%
where $C_W$ is a dimensional constant. If we\ now choose $\beta =\frac{1}{27}
$ and $D=27C_W$, then we obtain for each $t>0$ that%
\begin{align*}
&\sup_{t\geq \lambda >0}\lambda ^{2}\left\vert \left\{ I_{\alpha }(f\sigma)
>\lambda \right\} \right\vert _{\omega } \leq \sup_{t\geq \lambda >0}\left(
3\lambda \right) ^{2}\left\vert \left\{ I_{\alpha }f\sigma >3\lambda
\right\} \right\vert _{\omega } \\
&\leq \big(\left( 81 \mathfrak{T}_{I_{\alpha }}^{D}(3)\right) ^{2}D+
C(\varepsilon) A_2^\alpha(\sigma, \omega) \big)\left( \int f^{2}d\sigma
\right) +\frac{2}{3}\sup_{t\geq \lambda >0}\lambda ^{2}\left\vert \left\{
I_{\alpha }(f\sigma) >\lambda \right\} \right\vert _{\omega }\ ,
\end{align*}
We now claim that $\sup_{t\geq \lambda >0}\lambda ^{2}\left\vert \left\{
I_{\alpha }(f\sigma) >\lambda \right\} \right\vert _{\omega }$ is finite for
all $t>0$, which will complete the proof of the theorem after subtracting
the last term on the right hand side from both sides, and then letting $%
t\rightarrow \infty $. To prove the claim we recall that $f$ is bounded and
supported in $B(0, R)$, so that
\begin{equation*}
I_{\alpha }(f\sigma) \left( x\right)\approx \frac{1}{|x|^{n- \alpha }}
\int_{B(0, R)} fd \sigma, \qquad x \notin 3B(0, R).
\end{equation*}%
In other words,
\begin{equation*}
(3B(0, R))^c\cap\left\{ I_{\alpha }(f\sigma )>\lambda \right\}\subset B\Big(%
0, \Big(\frac {c_{n, \alpha}} \lambda \int_{B(0, R)}fd\sigma\Big)^{\frac
1{n-\alpha}}\Big)=: B(0, r_\lambda).
\end{equation*}
Then for $0< \lambda \le t$ we have%
\begin{align*}
\lambda ^{2}\left\vert \left\{ I_{\alpha }(f\sigma )>\lambda \right\}
\right\vert _{\omega } &= \lambda ^{2}\left\vert 3B(0, R)\cap\left\{
I_{\alpha }(f\sigma )>\lambda \right\} \right\vert _{\omega } + \lambda
^{2}\left\vert (3B(0, R))^c\cap\left\{ I_{\alpha }(f\sigma )>\lambda
\right\}\right\vert _{\omega } \\
&\le t^{2}\left\vert 3B(0, R) \right\vert _{\omega } + \frac{c_{n-\alpha}^2}{%
r_\lambda^{2(n-\alpha)}} \Big(\int_{B(0, R)} fd\sigma\Big)^2 |B(0,
r_\lambda)|_\omega \\
&\le t^{2}\left\vert 3B(0, R) \right\vert _{\omega } + c_{n, \alpha}^{\prime
}A_2^\alpha(\sigma, \omega) \int f^2 d~\sigma,
\end{align*}%
where in the last step we have used the fact that $r_\lambda > R$. This
proves the claim.

This completes the proof of the theorem since
\begin{equation*}
A_{2}^{\alpha }\left( \sigma ,\omega \right) +\mathfrak{T}_{I_{\alpha
}}^{D}(3)\left( \omega ,\sigma \right) \lesssim A_{2}^{\alpha }\left( \sigma
,\omega \right) +\mathfrak{T}_{I_{\alpha }}\left( \omega ,\sigma \right)
\lesssim \mathfrak{N}_{I_{\alpha }}^{\mathrm{weak}}\left( \sigma ,\omega
\right)
\end{equation*}%
is trivial.
\end{proof}

\end{document}